\numberwithin{equation}{section}
\newtheorem{theorem}[equation]{Theorem}
\newtheorem{proposition}[equation]{Proposition}
\newtheorem{lemma}[equation]{Lemma}
\theoremstyle{remark}
\newtheorem{remark}[equation]{Remark}
\DeclareMathOperator{\Tr}{Tr}
\newcommand{\F}{\mathbb{F}}
\begin{document}

\title[Permutation polynomials]{Permutation polynomials of the form $X +\gamma\Tr(X^k)$}

\author{Gohar Kyureghyan}
\address{Otto-von-Guericke University, Magdeburg, Germany}
\email{gohar.kyureghyan@ovgu.de}

\author{Michael E. Zieve} \address{Department of Mathematics,
University of Michigan,
Ann Arbor, MI 48109-1043, USA}
\email{zieve@umich.edu}

\thanks{
The authors thank the referee for providing several corrections and helpful suggestions.
The second author was partially supported by NSF grant DMS-1162181.}

\maketitle

\section{Introduction} \label{kyureg-zieve:seclabel1}

Let $\F_q$ be the finite field of size $q$, and let $p$ be the characteristic of $\F_q$.
A \emph{permutation polynomial} over $\F_q$ is a polynomial $F(X) \in \F_q[X]$ for which the induced function $x\mapsto f(x)$
is a permutation of $\F_q$.
Permutation polynomials are used in various applications of finite fields, where special importance is attached
to permutation polynomials which have few terms.  Due in part to this, and also due to the intrinsic interest of the problem,
the study of permutation polynomials with few terms has thrived for well over a century.  In this paper we make a new
contribution to this topic by analyzing permutation polynomials of the form $X+\gamma \Tr_{q^n/q}(X^k)$ where $n$ and $k$
are positive integers, $\gamma\in\F_{q^n}^*$, and $\Tr_{q^n/q}(X):=X^{q^{n-1}}+X^{q^{n-2}}+\dots+X^q+X$ (so that $\Tr_{q^n/q}$
induces the trace map from $\F_{q^n}$ to $\F_q$).  We produce the following families of permutation polynomials:

\begin{theorem}\label{main}
The polynomial $X + \gamma \Tr_{q^n/q}(X^k)$ is a permutation polynomial over\/ $\F_{q^n}$ in each of the following cases:
\begin{description}
\item[(a)] $n=2$, $q\equiv \pm 1\pmod{6}$, $\gamma=-1/3$, $k=2q-1$
\item[(b)] $n=2$, $q\equiv 5\pmod{6}$, $\gamma^3=-1/27$, $k=2q-1$
\item[(c)] $n=2$, $q\equiv 1\pmod{3}$, $\gamma=1$, $k=(q^2+q+1)/3$
\item[(d)] $n=2$, $q\equiv 1\pmod{4}$,
$(2\gamma)^{(q+1)/2}= 1$, $k=(q+1)^2/4$
\item[(e)] $n=2$, $q=Q^2$, $Q>0$ odd, $\gamma=-1$, $k=Q^3-Q+1$
\item[(f)] $n=2$, $q=Q^2$, $Q>0$ odd, $\gamma=-1$, $k=Q^3+Q^2-Q$
\item[(g)] $n=3$, $q$ odd, $\gamma=1$, $k=(q^2+1)/2$
\item[(h)] $n=3$, $q$ odd, $\gamma=-1/2$, $k=q^2-q+1$
\item[(i)] $n=2\ell r$, $\gamma^{q^{2\ell}-1}=-1$, $k=q^{\ell}+1$ for some positive integers $\ell$ and $r$.
\end{description}
\end{theorem}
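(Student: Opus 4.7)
Since $F(X):=X+\gamma\Tr_{q^n/q}(X^k)$ satisfies $F(x)-x\in\gamma\F_q$, any coincidence $F(x')=F(x)$ forces $x'=x-\gamma c$ with $c:=\Tr_{q^n/q}(x'^k)-\Tr_{q^n/q}(x^k)\in\F_q$, and then $c=\Tr_{q^n/q}((x-\gamma c)^k-x^k)$. Hence $F$ permutes $\F_{q^n}$ iff for every $c\in\F_q^*$ there is no $x\in\F_{q^n}$ with
\[
\Tr_{q^n/q}\!\bigl((x-\gamma c)^k - x^k\bigr) = c.
\]
Each family (a)--(i) will be verified by expanding $(x-\gamma c)^k$ for the specific exponent $k$, applying the trace, and ruling out $c\ne 0$.

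\textbf{The structural case (i).} With $k=q^\ell+1$ and $c^{q^\ell}=c$,
\[
(x-\gamma c)^{q^\ell+1}-x^{q^\ell+1}=-\gamma c\,x^{q^\ell}-\gamma^{q^\ell}c\,x+\gamma^{q^\ell+1}c^2 .
\]
Since $\Tr_{q^n/q}(\gamma x^{q^\ell})=\Tr_{q^n/q}(\gamma^{q^{-\ell}}x)$, the criterion becomes
\[
-c\,\Tr_{q^n/q}\!\bigl((\gamma^{q^\ell}+\gamma^{q^{-\ell}})x\bigr)+c^2\,\Tr_{q^n/q}(\gamma^{q^\ell+1})=c.
\]
The hypothesis $\gamma^{q^{2\ell}}=-\gamma$ annihilates the $\F_q$-linear form in $x$; the same hypothesis forces $\alpha:=\gamma^{q^\ell+1}$ to satisfy $\alpha^{q^\ell}=-\alpha$, so $\alpha\in\F_{q^{2\ell}}$ and
\[
\Tr_{q^{2\ell}/q}(\alpha)=\sum_{i=0}^{\ell-1}\bigl(\alpha^{q^i}+\alpha^{q^{i+\ell}}\bigr)=0.
\]
Transitivity of trace (using $2\ell\mid n$) then gives $\Tr_{q^n/q}(\alpha)=0$, so the equation collapses to $0=c$, contradicting $c\ne0$.

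\textbf{The remaining cases (a)--(h).} Each has $n\in\{2,3\}$ and an exponent tailored so that $x^k$ admits a closed form in the Frobenius conjugates of $x$. For $n=2$ one writes $\Tr_{q^2/q}(x^k)=x^k+x^{kq}$ and reduces $kq$ modulo $q^2-1$: e.g.\ in (a) we get $kq\equiv 2-q$, so $\Tr(x^{2q-1})=(x^{2q}+x^2)/x^q$ on $\F_{q^2}^*$; substituting $x-\gamma c$ and clearing denominators produces a polynomial identity over $\F_q$ whose unsolvability in $c$ is controlled by the cubic character of $-3$ (resp.\ $-27$), matching the congruence conditions on $q\pmod 6$. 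Case (d) similarly reduces to a constraint on the square character of $2\gamma$, and (c) to one on cube roots of unity. For $n=3$ in (g), (h), the exponents $(q^2+1)/2$ and $q^2-q+1$ are chosen so that $\Tr_{q^3/q}(x^k)$ telescopes into a tractable combination of the norm $x^{1+q+q^2}$ and a linearized polynomial in $x$, and the prescribed $\gamma$ forces the resulting equation in $c$ to be trivial under $q$ odd. Cases (e), (f) exploit the tower $\F_Q\subset\F_q=\F_{Q^2}\subset\F_{q^2}=\F_{Q^4}$ and the extra Frobenius $x\mapsto x^Q$ to factor $(x-\gamma c)^k-x^k$ and pair up conjugate terms in the trace.

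\textbf{Main obstacle.} The unified argument for (i) is clean once the trace-vanishing trick is spotted, and (c), (d), (g), (h) should yield analogously from their algebraic structure. The real difficulty lies in (e), (f), whose exponents $Q^3-Q+1$ and $Q^3+Q^2-Q$ have no evident algebraic meaning in $\F_{Q^4}^*$; here I anticipate needing a nontrivial substitution exploiting the oddness of $Q$ to match the expanded trace against a known non-representation statement in $\F_Q$. I would therefore tackle (i) first, then (c), (d), (g), (h), then (a), (b), leaving (e), (f) for last.
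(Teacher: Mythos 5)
Your general collision criterion is correct, and it is precisely equivalent to Proposition~\ref{prop-surj} of the paper: the equation $\Tr_{q^n/q}\bigl((x-\gamma c)^k-x^k\bigr)=c$ with $c\ne 0$ is exactly the failure of injectivity of $u\mapsto u+f(\alpha+\gamma u)$ on a line $\alpha+\gamma\F_q$. Your case (i) is complete and correct, and is essentially the paper's own proof: expand $(x-\gamma c)^{q^\ell+1}$, kill the linear term via $\Tr(\gamma^{q^\ell}x)=\Tr(\gamma^{q^{2\ell}}x^{q^\ell})=-\Tr(\gamma x^{q^\ell})$, and kill the constant term via $\alpha^{q^\ell}=-\alpha$ for $\alpha:=\gamma^{q^\ell+1}$ together with $2\ell\mid n$.

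For cases (a)--(h), however, you have written a research plan rather than a proof, and the plan omits the ideas that actually carry those cases. The paper's workhorse there is not the line-by-line criterion but Proposition~\ref{zlem} (and its consequence Proposition~\ref{red-sq2}), which converts the problem into showing that an explicit low-degree rational function permutes $\mu_{q+1}$ or $\mu_{q^2+q+1}$. In case (a) the substantive step is proving injectivity of $(X^3-3X^2+1)/(X^3-3X+1)$ on $\mu_{q+1}$, which rests on the factorization of the numerator of $g(X)-g(Y)$ as $3(X-Y)(XY-X+1)(XY-Y+1)$ plus an argument inside $\mu_{q+1}$; nothing resembling a ``cubic character of $-3$'' condition appears (the hypothesis $q\equiv\pm1\pmod 6$ is just $\gcd(q,6)=1$). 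In case (d) the map is \emph{not} a single power map or affine map on $\mu_{q+1}$: one must split $\mu_{q+1}$ into squares and nonsquares and check that the map permutes each half separately, an idea absent from your sketch. Case (g) needs a double application of Proposition~\ref{zlem} composed with bijective power maps, and case (h) needs the planar-function identity $L(H(x^2))=L(x)^2$ and a squares/nonsquares decomposition of $\F_{q^3}$. Most seriously, cases (e) and (f) --- which you explicitly defer --- are proved in the paper by a lengthy Dobbertin-style elimination involving Gr\"obner bases, resultants, and explicit auxiliary quadratics $D,E,H$; no short trace computation is known to replace it. As it stands, your proposal establishes one of the nine cases.
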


We remark that ``at random" one would not expect there to be permutation polynomials of the form $X+\gamma\Tr_{q^n/q}(X^k)$
over $\F_{q^n}$ when $Q:=q^n$ is sufficiently large and $\gamma\in\F_{Q}^*$, since the polynomials of this form induce fewer than $Q^2$
distinct functions on $\F_{Q}$ while the proportion of permutations amongst all functions $\F_{Q}\to\F_{Q}$ is $Q!/Q^Q$.
Hence it seems that any infinite sequence of examples such as those in Theorem~\ref{main} should exist for some ``good reason", and
it is an interesting challenge to understand the different sorts of reasons which can account for such unexpected examples to occur.

An intriguing feature of the present paper is that we need several different methods in order to prove the various cases of
Theorem~\ref{main}.  In cases (a)--(d) and (g) we use a well-known result (Proposition~\ref{zlem})
asserting that $X h(X^{q-1})$ permutes $\F_{q^n}$ if and only
if $X h(X)^{q-1}$ permutes the set $\mu_{(q^n-1)/(q-1)}$ of $(q^n-1)/(q-1)$-th roots of unity.  We then reformulate the latter condition in a simpler way (see
Proposition~\ref{red-sq2}), so that cases (a) and (b) boil down to showing that a certain degree-$3$ rational function permutes
$\mu_{q+1}$, and case (c) boils down to showing that a certain power map $X^N$ permutes $\mu_{q+1}$.  Case (d) is more interesting,
since we wind up showing that $Xh(X)^{q-1}$ permutes $\mu_{q+1}$ by showing that it acts as $c_1^2 X$ on the nonsquares in $\mu_{q+1}$
and $c_2^2 X^N$ on the squares, for certain elements $c_1,c_2\in\mu_{q+1}$; in this case it seems complicated to analyze $X h(X)^{q-1}$
at once on all elements of $\mu_{q+1}$, rather than treating the squares and nonsquares separately.
In case (g) we use a double application of Proposition~\ref{zlem}, by first saying that the given polynomial permutes $\F_{q^3}$
if and only if a certain associated polynomial permutes $\mu_{q^2+q+1}$, then composing the associated polynomial with certain
power maps $X^N$ which permute $\mu_{q^2+q+1}$, and then reducing the composition mod $X^{q^2+q+1}-1$ to obtain a polynomial
which (again by Proposition~\ref{zlem}) permutes $\mu_{q^2+q+1}$ if and only if a certain additive polynomial permutes $\F_{q^3}$;
we then finish the proof by directly verifying this last bijectivity.
This kind of approach does not seem to work in cases (e) and (f), so for these cases we use a variant of Dobbertin's method~\cite{D}, which
involves computing Gr\"obner bases of several ideals in a multivariate polynomial ring.  
This variant involves new features compared to previous applications of Dobbertin's approach, and may be of independent
interest.
We prove case (h) by composing with certain bijective power maps and additive polynomials to obtain a function which permutes
both the squares and the nonsquares in $\F_{q^3}$; this method was inspired by the notion of affine equivalence of planar functions.
Finally, in case (i) we use an additive analogue of Proposition~\ref{zlem} (see Proposition~\ref{prop-surj})
in order to show that the stated polynomial permutes $\F_{q^n}$ if and only if certain associated polynomials permute $\F_q$, and
we calculate that each of these associated polynomials induces the same function on $\F_q$ as does $X+c$ for some $c\in\F_q$.

We computed all permutation polynomials over fields $\F_{q^n}$ of the form $X+\gamma\Tr_{q^n/q}(X^k)$ with $\gamma\in\F_{q^n}^*$,
$q$ odd, $n>1$, and $q^n<5000$.  Here the hypothesis $n>1$ is needed to distinguish our study from that of permutation binomials.
It is easy to see that, for fixed $q,n,\gamma$, we can replace $k$ by $qk$ or by $k+q^n-1$ without affecting whether $X+\gamma\Tr_{q^n/q}(X^k)$
permutes $\F_{q^n}$.  Modulo this equivalence, the only permutation polynomials arising in our computation which are not listed in Theorem~\ref{main}
are:
\begin{itemize}
\item Cases with $k=p^i$, where the polynomial is automatically a member of the much-studied class of \emph{additive} permutation polynomials.
\item Cases with $p\mid n$ and $(q^n-1)\mid k(q^{n/p}-1)$, where the polynomial induces the identity map on $\F_{q^n}$.
\item $q=7$, $n=2$, $k=10$, $\gamma^4=1$
\item $q=9$, $n=2$, $k=33$, $\gamma^2-\gamma=1$
\item $q=27$, $n=2$, $k=261$, $(\gamma-1)^{13}=\gamma^{13}$
\item $q=9$, $n=3$, $k\in\{11,19,33,57\}$, $\gamma^4=-1$
\item $q=49$, $n=2$, $k=385$, $\gamma^5=-1$.
\end{itemize}
Thus, it seems that Theorem~\ref{main} may explain the bulk of all permutation polynomials of this form, so long as we exclude the simple cases where
the polynomial is additive or induces the identity map.

Permutation polynomials of the form $X+\gamma\Tr_{q^n/q}(X^k)$ were studied in \cite{charpin-kyureg-seta, charpin-kyureg-fq}.
All examples in those papers had the special feature that $\gamma$
is a linear translator of $f(X):=\Tr_{q^n/q}(X^k)$, in the sense that there is some $\delta\in\F_q$ for which
\[
f(x+u\gamma)-f(x)=u\delta
\]
for all $x\in\F_{q^n}$ and $u\in\F_q$.
This property is satisfied (with $\delta=0$) for the polynomials in case (i) of Theorem~\ref{main}, as we show in Remark~\ref{finalrem}.
However, the permutation polynomials in cases (a)--(h) of Theorem~\ref{main} are of a different nature, since for these polynomials
$\gamma$ is \emph{not} a linear translator of $\Tr_{q^n/q}(X^k)$.

This paper is organized as follows.  In Section~\ref{kyureg-zieve:general} we prove some general properties about maps of
the form $X+\gamma\Tr_{q^n/q}(X^k)$.
Then in Sections \ref{kyureg-zieve:case1}--\ref{kyureg-zieve:case9} we prove Theorem~\ref{main} in each of cases (a)--(i).
In addition, we note that Theorems \ref{theorem1}, \ref{case5}, \ref{case6} and \ref{case8} list further  families of sparse rational functions which permute
either a finite field or a group of roots of unity in a finite field.

\section{General remarks and equivalent statements}\label{kyureg-zieve:general}

We begin with two propositions which reformulate the condition that $X+\gamma\Tr_{q^n/q}(X^k)$ should permute 
$\F_{q^n}$ in terms of properties
of some associated functions on $\F_q$.  These reformulations play a crucial role in our proof of Theorem~\ref{main}.
In fact the reformulations apply to a more general class of functions.

\begin{proposition}\label{prop-surj}
For any function $f\colon\F_{q^n}\to\F_q$ with $n\ge 2$, and any $\gamma\in\F_{q^n}^*$, the following
three statements are equivalent:
\begin{description}
\item[(a)]
The map
$F\colon x\mapsto x + \gamma \cdot f(x)$
is bijective on\/ $\mathbb{F}_{q^n}$.
\item[(b)] For each $\alpha \in \mathbb{F}_{q^n}$ the map
$x\mapsto x+ f(\alpha +\gamma \cdot x) $ is bijective on\/ $\F_q$.
\item[(c)]
 For each $\alpha \in \mathbb{F}_{q^n}$ there is a unique $x\in\F_q$
 for which $x+ f(\alpha +\gamma \cdot x)=0$.
\end{description}
\end{proposition}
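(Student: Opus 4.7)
The plan is to exploit the simple observation that $F(x)-x = \gamma f(x)$ always lies in the one-dimensional $\mathbb{F}_q$-subspace $\gamma\mathbb{F}_q$ of $\mathbb{F}_{q^n}$. Consequently $F$ sends each coset $\alpha+\gamma\mathbb{F}_q$ into itself, so $F$ is a bijection of $\mathbb{F}_{q^n}$ if and only if its restriction to each such coset is a bijection of that coset.

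To recast this restriction on $\mathbb{F}_q$, I would use the bijective parametrization $\mathbb{F}_q \to \alpha+\gamma\mathbb{F}_q$ given by $x\mapsto \alpha+\gamma x$ (this is a bijection because $\gamma\ne 0$ and $\dim_{\mathbb{F}_q}\mathbb{F}_{q^n}\ge 2$ so $\gamma\mathbb{F}_q$ really is a proper subspace, though we don't even need that for the argument). A direct computation gives
\[
F(\alpha+\gamma x) \;=\; \alpha+\gamma x + \gamma f(\alpha+\gamma x) \;=\; \alpha+\gamma\bigl(x+f(\alpha+\gamma x)\bigr),
\]
so under this parametrization the restriction of $F$ to the coset corresponds exactly to the map $g_\alpha\colon x\mapsto x+f(\alpha+\gamma x)$ on $\mathbb{F}_q$. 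Hence (a) holds iff $g_\alpha$ is a bijection of $\mathbb{F}_q$ for every $\alpha\in\mathbb{F}_{q^n}$, which is (b). This handles (a)$\Leftrightarrow$(b).

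Since (b) asserts that each $g_\alpha$ is a bijection, it certainly implies that $g_\alpha$ takes the value $0$ exactly once, giving (c). For the converse (c)$\Rightarrow$(b), I would reduce hitting an arbitrary value $c\in\mathbb{F}_q$ to hitting $0$ at a shifted base point: the equation $g_\alpha(x)=c$ rewrites as
\[
(x-c)+f\bigl((\alpha+\gamma c)+\gamma(x-c)\bigr) \;=\; 0,
\]
i.e.\ $g_{\alpha+\gamma c}(x-c)=0$. By (c) applied to $\alpha+\gamma c$, this has exactly one solution in $x-c\in\mathbb{F}_q$, hence exactly one solution $x\in\mathbb{F}_q$. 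So each value $c$ has a unique preimage under $g_\alpha$, making $g_\alpha$ a bijection of $\mathbb{F}_q$.

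The proof is essentially all structural; there is no real obstacle beyond noticing the coset-preservation of $F$, which is the only genuinely new idea. The (c)$\Rightarrow$(b) direction is the one spot where some care is needed, namely the shift $\alpha\leadsto\alpha+\gamma c$ that lets one reduce from an arbitrary target to the target $0$.
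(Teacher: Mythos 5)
Your proof is correct and follows essentially the same route as the paper's: the coset-preservation of $F$ and the parametrization $x\mapsto\alpha+\gamma x$ give (a)$\Leftrightarrow$(b) exactly as in the paper, and your shift $\alpha\leadsto\alpha+\gamma c$ for (c)$\Rightarrow$(b) is the same trick the paper uses (there phrased contrapositively, shifting by the common image $y$ of two colliding points). Your direct version is if anything slightly cleaner, but it is not a genuinely different argument.
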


\begin{proof}
For each $\alpha\in\F_{q^n}$, the function $F$ maps the line $\alpha+\gamma\,\F_q$ into itself,
so that $F$ permutes $\F_{q^n}$ if and only if $F$ induces a permutation on each such line.
Explicitly, for $u\in\F_q$ we have 
\[
F(\alpha + \gamma\,u) = \alpha + \gamma(u + f(\alpha +\gamma\,u)), 
\]
so that $F$ permutes the line $\alpha+\gamma\,\F_q$ if and only if the function $u\mapsto u+f(\alpha+\gamma\,u)$
permutes $\F_q$.  Thus (a) and (b) are equivalent.  Since (b) immediately implies (c), it remains only to show
that (c) implies (b), or equivalently that if (b) did not hold then (c) would not hold.  So suppose there is
some $\alpha'\in\F_{q^n}$ for which the function $x\mapsto x+f(\alpha'+\gamma\cdot x)$ is not bijective on $\F_q$.
Then there are distinct elements $u_1,u_2\in\F_q$ which have the same image $y$ under this function.
Hence for $i=1,2$ we have
\[
y = u_i + f(\alpha' +\gamma \cdot u_i),
\]
or equivalently
\[
(u_i - y) +  f(\alpha'+\gamma \cdot (u_i-y)+ \gamma y) = 0.
\]
Thus $x_i:=u_i-y$ satisfies $x_i+f(\alpha'+\gamma y + \gamma x_i)=0$;
since $x_1$ and $x_2$ are distinct elements of $\F_q$, this
contradicts (c) for the value $\alpha:=\alpha'+y\gamma$.
\end{proof}

The next result is a special case of \cite[Lemma~2.1]{Z}.

\begin{proposition}\label{zlem}
For any $h(X)\in\F_{q^n}[X]$, the polynomial $X h(X^{q-1})$ permutes\/ $\F_{q^n}$
if and only if $X h(X)^{q-1}$ permutes the set of $(q^n-1)/(q-1)$-th roots of unity in\/ $\F_{q^n}^*$.
\end{proposition}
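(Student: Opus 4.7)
The plan is to relate the two maps via the surjective group homomorphism $\phi \colon \F_{q^n}^{*} \to \mu_s$ defined by $\phi(x) := x^{q-1}$, where $s := (q^n-1)/(q-1)$. Since $\F_{q^n}^{*}$ is cyclic of order $(q-1)s$, the map $\phi$ is a surjection onto the unique subgroup of order $s$, namely $\mu_s$, with kernel $\F_q^{*}$; in particular every fiber has size exactly $q-1$. Write $f(X) := X h(X^{q-1})$ and $g(X) := X h(X)^{q-1}$. Since $f(0)=0$, the polynomial $f$ permutes $\F_{q^n}$ if and only if it permutes $\F_{q^n}^{*}$.

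The key observation is that $f$ behaves very simply on the fibers of $\phi$. If $x \in \F_{q^n}^{*}$ and $\zeta := \phi(x) = x^{q-1}$, then $f(x) = h(\zeta) \cdot x$; that is, $f$ restricted to $\phi^{-1}(\zeta)$ is multiplication by the scalar $h(\zeta) \in \F_{q^n}$. When $h(\zeta) \ne 0$, this scalar multiplication bijects $\phi^{-1}(\zeta)$ onto $\phi^{-1}(\zeta\,h(\zeta)^{q-1}) = \phi^{-1}(g(\zeta))$, so the square $\phi \circ f = g \circ \phi$ commutes over nonzero values. When $h(\zeta) = 0$, the entire fiber $\phi^{-1}(\zeta)$ is mapped to $0$, which together with $f(0)=0$ immediately destroys bijectivity of $f$ on $\F_{q^n}$.

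From this the equivalence falls out by chasing fibers. If $g$ permutes $\mu_s$, then $g$ is nowhere $0$ on $\mu_s$, forcing $h$ to be nonzero on every $\zeta \in \mu_s$; each fiber $\phi^{-1}(\zeta)$ is then carried bijectively onto $\phi^{-1}(g(\zeta))$, and as $\zeta$ ranges over $\mu_s$ the targets $g(\zeta)$ exhaust $\mu_s$, so $f$ permutes $\F_{q^n}^{*}$ and hence $\F_{q^n}$. Conversely, if $f$ permutes $\F_{q^n}$, the fiber-collapse observation forces $h$ to have no zero in $\mu_s$, so $g$ maps $\mu_s$ into $\mu_s$; then the commuting square, together with surjectivity of $\phi$, implies that $g$ must hit every element of $\mu_s$, hence permutes it.

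The whole argument is mostly bookkeeping once the commutative diagram is set up. The only subtle point is properly accounting for the possibility that $h$ has a root in $\mu_s$: this is precisely what links bijectivity of $f$ to the requirement that $g$ maps $\mu_s$ into itself rather than into $\mu_s \cup \{0\}$, and it is what prevents one from being tempted to state the lemma purely as an orbit-lifting statement without a nonvanishing hypothesis.
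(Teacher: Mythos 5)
Your proof is correct. The paper itself offers no argument for this proposition --- it simply cites it as a special case of \cite[Lemma~2.1]{Z} --- and your fiber-chasing argument via the commuting square $\phi\circ f=g\circ\phi$ (with the careful handling of the case $h(\zeta)=0$) is precisely the standard proof given in that reference, so there is nothing to add.
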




We now give further reformulations in case $n=2$.

\begin{proposition}\label{red-sq1}
Let $\gamma, \omega\in\F_{q^2}$ be linearly independent over\/ $\F_q$,
 and let $f\colon \F_{q^2} \to \F_q$ be a function satisfying $f(u\cdot x) = u\cdot f(x)$ for each $u\in \F_q$ and $x \in \F_{q^2}$.
Then
$x\mapsto x +  \gamma\cdot f(x)$ permutes\/ $\F_{q^2}$ if and only if
$f(\gamma) \ne -1$ and 
$x\mapsto x +  f(\omega + \gamma x)$ permutes\/ $\F_q$. 
\end{proposition}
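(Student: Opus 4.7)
The plan is to deduce this from Proposition~\ref{prop-surj} applied to $n=2$, exploiting the $\F_q$-homogeneity of $f$ to collapse the family of conditions indexed by $\alpha\in\F_{q^2}$ down to the two conditions appearing in the statement. Since $\gamma,\omega$ are linearly independent over $\F_q$ and $[\F_{q^2}:\F_q]=2$, they form an $\F_q$-basis of $\F_{q^2}$, so every $\alpha\in\F_{q^2}$ has the form $\alpha=a\omega+b\gamma$ with $a,b\in\F_q$.

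By Proposition~\ref{prop-surj}, $x\mapsto x+\gamma f(x)$ permutes $\F_{q^2}$ if and only if for every such $\alpha$ the map $x\mapsto x+f(\alpha+\gamma x)$ permutes $\F_q$. Writing $\alpha+\gamma x=a\omega+\gamma(b+x)$ and making the affine substitution $y=x+b$, this condition becomes $y\mapsto y+f(a\omega+\gamma y)$ permuting $\F_q$ (the constant $b$ drops out). So the task is to show that the family of conditions indexed by $a\in\F_q$ is equivalent to the two conditions in the proposition.

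For $a=0$, the $\F_q$-homogeneity gives $f(\gamma y)=y f(\gamma)$, so the map is $y\mapsto (1+f(\gamma))y$, which permutes $\F_q$ iff $f(\gamma)\ne -1$. For $a\ne 0$, I would write $a\omega+\gamma y=a(\omega+\gamma(y/a))$ and use homogeneity to get $f(a\omega+\gamma y)=a\, f(\omega+\gamma(y/a))$; then the substitution $z=y/a$ turns $y\mapsto y+f(a\omega+\gamma y)$ into $a$ times the map $z\mapsto z+f(\omega+\gamma z)$, so all of these maps are simultaneously permutations of $\F_q$, and the single condition with $a=1$ (which uses $\omega$ directly) suffices.

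Combining the two cases, the condition from Proposition~\ref{prop-surj} holds iff $f(\gamma)\ne -1$ and $x\mapsto x+f(\omega+\gamma x)$ permutes $\F_q$. I do not expect any real obstacle here: the only delicate point is making sure the homogeneity argument treats the $a=0$ case separately, since dividing by $a$ is needed to reduce all nonzero $a$ to a single representative.
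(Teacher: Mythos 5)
Your proposal is correct and follows essentially the same route as the paper: both apply Proposition~\ref{prop-surj}, expand $\alpha$ in the basis $\{\gamma,\omega\}$, use homogeneity to reduce the case with zero $\omega$-coefficient to the condition $f(\gamma)\ne -1$, and rescale to collapse all nonzero coefficients to the single map $x\mapsto x+f(\omega+\gamma x)$. The only cosmetic difference is that you perform the translation and the scaling as two separate substitutions where the paper does them in one step.
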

\begin{proof}
By Proposition~\ref{prop-surj}, $x\mapsto x+\gamma\cdot f(x)$ permutes $\F_{q^2}$ if and only if
$F_{\alpha}\colon x\mapsto x+f(\alpha+\gamma\cdot x)$ permutes $\F_q$ for each $\alpha\in\F_{q^2}$.
The elements $\alpha\in\F_{q^2}$ are precisely the elements $\alpha:=u\cdot\gamma+v \cdot\omega$ with $u,v\in\F_q$,
so we treat each choice of $u,v$ in turn.
If $v=0$ then for $x\in\F_q$
we have
\[
F_{\alpha}(x) = x +  f((u+x)\gamma) = x+ (u+x)f(\gamma) = \left(1+ f(\gamma)\right)x + uf(\gamma),
\]
so that $F_{\alpha}$ permutes $\F_q$ if and only if $f(\gamma)\ne -1$.  If $v\ne 0$ then for $x\in\F_q$ we have
\[
F_{\alpha}(x) = x +  f(v\omega + (u+x)\gamma) = v\left(\frac{x}{v} + f\left(\omega + \frac{u+x}{v} \gamma \right)\right),
\]
so that $F_{\alpha}$ permutes $\F_q$ if and only if $x\mapsto x+f(\omega+(x+\frac{u}{v})\gamma)$ permutes
$\F_q$, or equivalently $x\mapsto x+f(\omega+x\gamma)$ permutes $\F_q$.
\end{proof}

\begin{proposition}\label{red-sq2}
Let $k:=(q-1)N+1$ for some integer $N\ge 0$, and pick any $\gamma\in\F_{q^2}^*$.  
Then the polynomial $F(X):=X + \gamma\Tr_{q^2/q}(X^k)$ permutes\/ $\F_{q^2}$
if and only if 
\[
H(X):=\frac{X^N+\gamma^q(1+X^{2N-1})}{X^{N-1}+\gamma(X^{2N-1}+1)}
\]
permutes the set $\mu_{q+1}$ of $(q+1)$-th roots of unity in\/ $\F_{q^2}^*$, or equivalently
this rational function is injective on $\mu_{q+1}$ and its denominator has no roots in $\mu_{q+1}$.
\end{proposition}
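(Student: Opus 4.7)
The plan is to apply Proposition~\ref{zlem} to transfer the question from $\F_{q^2}$ to $\mu_{q+1}$, and then to recognise the resulting function on $\mu_{q+1}$ as exactly $H(X)$.

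First I would rewrite $F(X)$ in the form $X\,h(X^{q-1})$. Using $k=(q-1)N+1$, one has $X^k=X(X^{q-1})^N$; and $X^{kq}=X^q\cdot(X^{q-1})^{Nq}=X\cdot(X^{q-1})^{Nq+1}$, since $X^q=X\cdot X^{q-1}$. Substituting yields
\[
F(X)=X\Bigl(1+\gamma(X^{q-1})^N+\gamma(X^{q-1})^{Nq+1}\Bigr)=X\,h(X^{q-1})
\]
with $h(Y):=1+\gamma Y^N+\gamma Y^{Nq+1}$. By Proposition~\ref{zlem}, $F$ permutes $\F_{q^2}$ if and only if $X\,h(X)^{q-1}$ permutes $\mu_{q+1}$.

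Next I would compute this latter function explicitly on $\mu_{q+1}$. On that set $X^q=X^{-1}$ and $X^{q^2}=X$, while $\gamma^{q^2}=\gamma$; feeding these into the definitions gives $h(X)=1+\gamma X^N+\gamma X^{1-N}$ and (via Frobenius) $h(X)^q=1+\gamma^q X^{-N}+\gamma^q X^{N-1}$. Writing $X\,h(X)^{q-1}=X\cdot h(X)^q/h(X)$ and clearing by multiplying numerator and denominator by $X^{N-1}$ produces precisely the rational function $H(X)$ of the statement.

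Finally I would verify the equivalence between ``$X\,h(X)^{q-1}$ permutes $\mu_{q+1}$'' and the two-part condition that the denominator of $H$ has no root in $\mu_{q+1}$ and $H$ is injective there. Since the denominator of $H$ evaluated at a point of $\mu_{q+1}$ equals $X^{N-1}h(X)$ and $X\ne 0$, it vanishes precisely where $h$ does; at such a point $X\,h(X)^{q-1}=0\notin\mu_{q+1}$, so permutation must fail. When instead $h(X)\ne 0$, raising $X\,h(X)^{q-1}$ to the $(q+1)$th power yields $X^{q+1}h(X)^{q^2-1}=1$, so the map sends $\mu_{q+1}$ into itself; finiteness then upgrades injectivity to bijectivity. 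The main obstacle is the bookkeeping in the middle step: keeping track of when a coefficient should be $\gamma$ versus $\gamma^q$, and carefully reducing exponents like $(Nq+1)q$ modulo $q^2-1$. Once the identity $X\,h(X)^{q-1}=H(X)$ on $\mu_{q+1}$ is in hand, the rest of the proposition follows formally from Proposition~\ref{zlem} together with the $(q+1)$th-power calculation.
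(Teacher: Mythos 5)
Your proposal is correct and follows essentially the same route as the paper: apply Proposition~\ref{zlem} to reduce to $X\,h(X)^{q-1}$ on $\mu_{q+1}$, rewrite it as $h(X)^q/h(X)$ times $X$ using $x^q=x^{-1}$ and Frobenius on the coefficients, and clear denominators by $X^{N-1}$ to obtain $H(X)$. Your closing observations (denominator roots correspond to zeros of $h$, and $(X h(X)^{q-1})^{q+1}=1$ forces $H(\mu_{q+1})\subseteq\mu_{q+1}$ so injectivity suffices) match the paper's final paragraph.
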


\begin{proof}
By Proposition~\ref{zlem}, $F(X)$ permutes $\F_{q^2}$ if and only if
\[
G(X):=X\bigl(1+\gamma (X^N+X^{qN+1})\bigr)^{q-1}
\]
permutes $\mu_{q+1}$.
For any $x\in\mu_{q+1}$ such that $G(x)\ne 0$, we have
\begin{align*}
\frac{G(x)}x &= \bigl(1+\gamma(x^N+x^{qN+1})\bigr)^{q-1} \\
&= \frac{1+\gamma^q(x^{Nq}+x^{q^2N+q})}{1+\gamma(x^N+x^{qN+1})} \\
&= \frac{1+\gamma^q(x^{-N}+x^{N-1})}{1+\gamma(x^N+x^{-N+1})} \\
&=\frac{H(x)}x.
\end{align*}
Therefore $G(X)$ permutes $\mu_{q+1}$ if and only if
$H(X)$ permutes $\mu_{q+1}$.
Finally, if the denominator of $H(X)$ has no roots in $\mu_{q+1}$
then the above computation shows that $G$ and $H$ agree on $\mu_{q+1}$,
and thus $H(\mu_{q+1})=G(\mu_{q+1})\subseteq\mu_{q+1}$, so that bijectivity
of $H$ on $\mu_{q+1}$ follows from injectivity.
\end{proof}


\section{The case that $n=2$ and $k=2q-1$}\label{kyureg-zieve:case1}

In this section we prove cases (a) and (b) of Theorem~\ref{main}.  We begin with a proof of (a), which also produces
some degree-3 rational functions which permute $\F_q$, as well as some degree-3 rational functions which permute
the set $\mu_{q+1}$ of $(q+1)$-th roots of unity in $\F_{q^2}^*$.
\begin{theorem} \label{theorem1}
If $\gcd(q,6) =1$ then the following are true:
\begin{description}
\item[(a)] $\displaystyle{F_1(X) := X - \frac{1}{3} \Tr_{q^2/q}(X^{2q-1})}$ permutes\/ $\F_{q^2}$. 
\item[(b)] For any nonsquare $\nu \in \F_q$, the function $\displaystyle{\frac{X(X^2 -9\nu)}{X^2 -\nu} }$ permutes\/ $\F_{q}$.  
\item[(c)] $g(X):=\displaystyle{\frac{ X^3-3X^2+ 1}{X^3 -3X +1} }$ permutes $\mu_{q+1}$.
\end{description}
\end{theorem}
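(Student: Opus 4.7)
The plan is to prove the three parts simultaneously, reducing (a)$\,\Leftrightarrow\,$(c) via Proposition~\ref{red-sq2} and (c)$\,\Leftrightarrow\,$(b) via a M\"obius parametrization of $\mu_{q+1}$, and then establishing (b) by a direct algebraic argument.

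First, to connect (a) and (c), apply Proposition~\ref{red-sq2} with $k = 2q - 1 = (q-1)\cdot 2 + 1$, so $N = 2$, and $\gamma = -1/3 \in \F_q$ (hence $\gamma^q = \gamma$). After clearing the $-1/3$ factors, the rational function $H(X)$ of that proposition becomes
\[
H(X) = \frac{X^3 - 3X^2 + 1}{X^3 - 3X + 1} = g(X).
\]
One must also check that the denominator of $g$ has no roots in $\mu_{q+1}$: if $x \in \mu_{q+1}$ satisfies $x^3 - 3x + 1 = 0$, raising to the $q$-th power (using $x^q = x^{-1}$) and clearing denominators yields $x^3 - 3x^2 + 1 = 0$ as well, so subtracting forces $3x(x-1) = 0$, which is impossible in $\mu_{q+1}$ since $\gcd(q,6) = 1$ and $x = 1$ fails $x^3 - 3x + 1 = 0$.

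Next, to connect (c) and (b), fix any nonsquare $\nu \in \F_q^*$ and choose $\omega \in \F_{q^2}$ with $\omega^2 = \nu$, so $\omega^q = -\omega$. The map $\phi \colon \F_q \cup \{\infty\} \to \mu_{q+1}$ defined by $\phi(Y) := (Y + \omega)/(Y - \omega)$ is a bijection with inverse $\phi^{-1}(X) = \omega(X+1)/(X-1)$. Using the factorization $2X^3 - 3X^2 - 3X + 2 = (X+1)(2X-1)(X-2)$, a direct calculation (in which the $\omega$'s collapse thanks to $\omega^2 = \nu$) yields
\[
\phi^{-1}\bigl(g(\phi(Y))\bigr) = \frac{Y(Y^2 - 9\nu)}{3(Y^2 - \nu)}.
\]
Hence $g$ permutes $\mu_{q+1}$ iff this rescaled rational function permutes $\F_q \cup \{\infty\}$, iff (multiplying by the nonzero constant $3$) the function in part (b) permutes $\F_q$.

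It remains to prove (b). Since $\nu$ is a nonsquare, $Y^2 - \nu$ has no zero in $\F_q$, so $h(Y) := Y(Y^2 - 9\nu)/(Y^2 - \nu)$ is well-defined on all of $\F_q$. Suppose $h(y_1) = h(y_2)$ with $y_1, y_2 \in \F_q$; cross-multiplying and factoring out $(y_1 - y_2)$ produces, in terms of $s := y_1 + y_2$ and $p := y_1 y_2$, the relation $\nu s^2 = (p+\nu)(p+9\nu)$, which rearranges to
\[
\nu(s^2 - 4p) = (p + 3\nu)^2.
\]
If $y_1 \ne y_2$, then $s^2 - 4p = (y_1 - y_2)^2$ is a nonzero square in $\F_q$, so the left-hand side is a nonsquare times a nonzero square, hence a nonsquare; this contradicts the right-hand side being a square, unless $p + 3\nu = 0$, in which case $s^2 - 4p = 0$ forces $y_1 = y_2$ anyway. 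Hence $h$ is injective, and thus bijective, on $\F_q$. The main obstacle is the computation of $\phi^{-1} \circ g \circ \phi$, which requires recognizing the factorization of $2X^3 - 3X^2 - 3X + 2$; the subsequent identity $\nu(s^2 - 4p) = (p+3\nu)^2$ is the clean algebraic coincidence that converts injectivity of $h$ into a quadratic-residue check.
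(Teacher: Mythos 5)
Your proposal is correct, but it organizes the equivalences differently from the paper and anchors the argument at a different part. The paper proves (c) directly: it rules out roots of $X^3-3X+1$ in $\mu_{q+1}$ by a norm argument on the product of the roots, and gets injectivity from the factorization of the numerator of $g(X)-g(Y)$ as $3(X-Y)(XY-X+1)(XY-Y+1)$ combined with the computation $(\alpha-1)^{q+1}=2-\alpha-1/\alpha$; it then deduces (a) from (c) via Proposition~\ref{red-sq2}, and separately proves (a)$\Leftrightarrow$(b) via Proposition~\ref{red-sq1} by evaluating $F_1$ on the lines $\alpha+\gamma\F_q$. You instead prove (b) directly through the symmetric-function identity $\nu(s^2-4p)=(p+3\nu)^2$ and a quadratic-residue argument, obtain (b)$\Leftrightarrow$(c) by conjugating $g$ with the M\"obius bijection $\phi(Y)=(Y+\omega)/(Y-\omega)$ from $\F_q\cup\{\infty\}$ onto $\mu_{q+1}$ (I verified that $\phi^{-1}\circ g\circ\phi$ is indeed $Y(Y^2-9\nu)/(3(Y^2-\nu))$), and obtain (c)$\Leftrightarrow$(a) from Proposition~\ref{red-sq2} exactly as the paper does. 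Your route bypasses Proposition~\ref{red-sq1} entirely, and the M\"obius change of coordinates moves the whole injectivity question to $\F_q$, where ``nonsquare times nonzero square is a nonsquare'' finishes it; the paper's injectivity argument is the same computation seen in the $\mu_{q+1}$ coordinate, at the cost of the slightly subtler norm condition $(\alpha-1)^{q+1}=1$. Both proofs are complete; yours is arguably the more elementary and self-contained treatment of this one theorem, while the paper's has the advantage of exercising the same Proposition~\ref{red-sq1}/\ref{red-sq2} machinery it reuses in later sections.
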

\begin{proof}
First we show that (a) and (b) are equivalent. Pick $\omega \in \F_{q^2}$ with $\omega^q = -\omega \ne 0$.
Then $\omega$ and $-1/3$ are linearly independent over $\F_q$.  We apply Proposition~\ref{red-sq1} with
$\gamma=-\frac13$ and $f(X):=\Tr_{q^2/q}(X^{2q-1})$, noting that $f(a\cdot x)=a\cdot f(x)$ for $a\in\F_q$ and $x \in \F_{q^2}$ since $2q-1\equiv 1\pmod{q-1}$.
Since $f(-\frac{1}{3})=-\frac23\ne -1$, it follows that (a) holds if and only if
\[
h(X) := X + \Tr_{q^2/q}\bigl((\omega -\frac{1}{3} X)^{2q-1}\bigr)
\]
permutes $\F_q$. For $x\in\F_q$ we have
\begin{eqnarray*}
h(3x) &=& 3x + \frac{(\omega - x)^{2q}}{\omega -x} + \frac{(\omega - x)^{2q^2}}{(\omega -x)^q}\\
&=& 3x + \frac{(-\omega - x)^{2}}{\omega -x} + \frac{(\omega - x)^{2}}{(-\omega -x)}\\
&=& \frac{x^3-9\omega^2x}{x^2-\omega^2}.
\end{eqnarray*}
Since the above equivalence holds for each $\omega$ with $\omega^q=-\omega\ne 0$, and
the set of all corresponding values $\omega^2$ coincides with the set of nonsquares in $\F_q$,
this shows that (a) and (b) are equivalent.
The equivalence of (a) and (c) follows from Proposition \ref{red-sq2} with $N=2$ and $\gamma=-\frac13$.
So it is enough to verify (c).  By Proposition~\ref{red-sq2}, it suffices to show that 
$g$ is both well-defined and injective on $\mu_{q+1}$.
If some $\alpha\in\mu_{q+1}$ is a root of $r(X):=X^3-3X+1$, then also $\alpha^q$ is a root of $r(X)$; since neither $1$
nor $-1$ is a root of $r(X)$, it follows that $\alpha\ne\alpha^q$.  Since the product of the roots of $r(X)$ is $-1$, the
third root of $r(X)$ must be $-1/\alpha^{q+1}=-1$, which is not the case since $r(-1)\ne 0$.
Thus $g$ is well-defined on $\mu_{q+1}$.
The numerator of $g(X)-g(Y)$ is
\begin{align*}
(X^3-3X^2&+1)(Y^3-3Y+1)-
(Y^3-3Y^2+1)(X^3-3X+1) \\
&=3(X-Y) (XY-X+1) (XY-Y+1).
\end{align*}
Hence if $g(\alpha)=g(\beta)$ for some distinct $\alpha,\beta\in\mu_{q+1}$
then $\alpha\beta\in\{\alpha-1,\beta-1\}$.  Assume without loss that $\alpha\beta=\alpha-1$,
so that $(\alpha-1)^{q+1}=1$.
But
\[
(\alpha-1)^{q+1}=\alpha^{q+1}-\alpha^q-\alpha+1=2-\frac{1}{\alpha}-\alpha,\]
so that $\alpha+\frac{1}{\alpha}=1$ and thus $\beta=1-\frac{1}{\alpha}=\alpha$, a contradiction.
\end{proof}

We now show that the permutation property of the polynomials in case (b) of Theorem~\ref{main}
follows at once from the analogous property in case (a).

\begin{theorem}\label{theorem2}
If $q \equiv 5 \pmod 6$ and $\gamma \in \F_{q^2}$ satisfies $\gamma^3 = -\frac{1}{27}$, then 
\[
F_2(X):= X + \gamma\,\Tr_{q^2/q}(X^{2q-1})
\]
permutes\/ $\F_{q^2}$.
\end{theorem}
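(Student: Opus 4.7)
The plan is to deduce Theorem~\ref{theorem2} directly from Theorem~\ref{theorem1}(a) by a linear change of variable. The key observation is that when $q\equiv 5\pmod 6$ we have $q\equiv 2\pmod 3$, so $3\nmid q-1$ but $3\mid q^2-1$; hence $\F_{q^2}$ contains a primitive cube root of unity $\omega$ lying outside $\F_q$, with $\omega^q=\omega^2=\omega^{-1}$.

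First I would write $\omega:=-3\gamma$ and observe that $\omega^3=-27\gamma^3=1$, so $\omega$ is a cube root of unity in $\F_{q^2}$, and $\gamma=-\omega/3$. If $\omega=1$ then $F_2=F_1$ and we are done by Theorem~\ref{theorem1}(a). So I would assume $\omega\in\F_{q^2}\setminus\F_q$.

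Next I would compute the effect of the substitution $X\mapsto \omega X$ on $F_2$. Using $\omega^q=\omega^{-1}$, one gets
\[
\omega^{2q-1}=(\omega^q)^2\omega^{-1}=\omega^{-2}\cdot\omega^{-1}=\omega^{-3}=1,
\]
so $(\omega X)^{2q-1}=X^{2q-1}$. Therefore
\[
F_2(\omega X)=\omega X+\gamma\,\Tr_{q^2/q}(X^{2q-1})=\omega\bigl(X-\tfrac13\Tr_{q^2/q}(X^{2q-1})\bigr)=\omega\cdot F_1(X).
\]

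Since $q\equiv 5\pmod 6$ certainly satisfies $q\equiv\pm 1\pmod 6$, Theorem~\ref{theorem1}(a) tells us that $F_1$ permutes $\F_{q^2}$. Multiplication by the nonzero element $\omega$ is a bijection of $\F_{q^2}$, as is the substitution $X\mapsto\omega X$, so the identity $F_2(\omega X)=\omega F_1(X)$ forces $F_2$ to permute $\F_{q^2}$ as well. There is no serious obstacle here; the only thing worth being careful about is correctly handling the Frobenius action on $\omega$ to verify $\omega^{2q-1}=1$, which is what makes the computation collapse so cleanly.
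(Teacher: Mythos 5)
Your proposal is correct and is essentially the paper's own argument: set $\omega=-3\gamma$, note $\omega^3=1$ hence $\omega^{2q-1}=1$, and conclude $F_2(\omega X)=\omega F_1(X)$ so that Theorem~\ref{theorem1}(a) finishes the job. The only cosmetic difference is that you verify $\omega^{2q-1}=1$ via the Frobenius action (after splitting off the case $\omega=1$), whereas the paper gets it immediately from $3\mid 2q-1$ (which holds since $q\equiv 2\pmod 3$); both are fine.
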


\begin{proof}
Since $\omega:=-3\gamma$ satisfies $\omega^3 =1$, we have $\omega^{2q-1} =1$ and thus
\begin{align*}
F_2(\omega X) &=  \omega X - \frac{1}{3} \omega \,\Tr_{q^2/q}(\omega^{2q-1}X^{2q-1}) \\
              & =  \omega \bigl( X - \frac{1}{3}  \,\Tr_{q^2/q}(X^{2q-1}) \bigr),
 \end{align*}
so the result follows from Theorem~\ref{theorem1}.
\end{proof}

\begin{remark}
A different proof of bijectivity of $F_1$ was given in the recent paper \cite{hou}.
\end{remark}


\section{The case that $n=2$ and $k=(q^2+q+1)/3$}
\label{kyureg-zieve:case3}

We now prove case (c) of Theorem~\ref{main}.

\begin{theorem}\label{case3}
If $q \equiv 1 \pmod 3$, then 
\[
F_3(X):= X + \Tr_{q^2/q}(X^{(q^2+q+1)/3})
\]
permutes\/ $\F_{q^2}$.
\end{theorem}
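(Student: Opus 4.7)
The plan is to apply Proposition~\ref{red-sq2} to reduce the permutation property of $F_3$ to a statement about a rational function on $\mu_{q+1}$, and then to observe that modulo the relation $X^{q+1}=1$ this rational function collapses to a single power map that is visibly bijective on $\mu_{q+1}$.

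First I would verify the hypothesis of Proposition~\ref{red-sq2}. Since $q\equiv 1\pmod 3$ we have $3\mid q^2+q+1$, so $k=(q^2+q+1)/3$ is an integer. Writing $k-1=(q^2+q-2)/3=(q-1)(q+2)/3$ shows $k=(q-1)N+1$ with $N:=(q+2)/3$, which is a positive integer because $q+2\equiv 0\pmod 3$. Moreover $3N=q+2\equiv 1\pmod{q+1}$, so the class of $N$ is the multiplicative inverse of $3$ in $\mathbb{Z}/(q+1)\mathbb{Z}$; equivalently $\gcd(N,q+1)=1$, so the power map $X\mapsto X^N$ permutes $\mu_{q+1}$.

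Proposition~\ref{red-sq2} with $\gamma=1$ then reduces the theorem to showing that
\[
H(X)=\frac{X^{2N-1}+X^N+1}{X^{2N-1}+X^{N-1}+1}
\]
is well-defined and injective on $\mu_{q+1}$. The key step is a change of variable: for $x\in\mu_{q+1}$, set $y:=x^N$; then $3N\equiv 1\pmod{q+1}$ gives $x=y^3$, whence $x^{N-1}=y^{-2}$ and $x^{2N-1}=y^{-1}$. Substituting and clearing $y^{-2}$ from numerator and denominator yields
\[
H(x)=\frac{y^{-1}+y+1}{y^{-1}+y^{-2}+1}=\frac{y^3+y^2+y}{y^2+y+1}=y\cdot\frac{y^2+y+1}{y^2+y+1}=y=x^N,
\]
provided $y^2+y+1\ne 0$. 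But $y^2+y+1=0$ means $y$ is a primitive cube root of unity, and since $q\equiv 1\pmod 3$ gives $q+1\equiv 2\pmod 3$, such a $y$ cannot lie in $\mu_{q+1}$. Hence $H$ is well-defined on $\mu_{q+1}$ and agrees there with the bijection $X\mapsto X^N$, completing the proof.

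The only real obstacle is spotting the substitution $y=x^N$ together with the identity $3N\equiv 1\pmod{q+1}$, which turns the apparently awkward rational function $H$ into a rescaled power map. Once this is recognised, verifying that $y^2+y+1$ does not vanish on $\mu_{q+1}$ is a one-line congruence check, and everything else is routine bookkeeping.
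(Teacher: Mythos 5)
Your proposal is correct and follows essentially the same route as the paper: apply Proposition~\ref{red-sq2} with $N=(q+2)/3$, substitute $y=x^N$ using $3N\equiv 1\pmod{q+1}$ to collapse $H$ to the power map $X^N$ on $\mu_{q+1}$, and note that $Y^2+Y+1$ has no roots in $\mu_{q+1}$ since its roots have order $3$ and $3\nmid q+1$. The only (cosmetic) difference is that you substitute $y=x^N$ directly into $H(x)$, whereas the paper precomposes with the bijection $Y\mapsto Y^3$; the two computations are identical.
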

\begin{proof}
By Proposition~\ref{red-sq2} with $N=\frac{q+2}3$, it suffices to show that
\[
g(X):=\frac{X^N+1+X^{2N-1}}{X^{N-1}+X^{2N-1}+1}
\]
permutes  $\mu_{q+1}$.  Here $3N\equiv 1\pmod{q+1}$, so by putting
$Y=X^N$ our condition becomes that
\[\frac{Y+1+\frac{1}Y}{\frac{1}{Y^2}+\frac1Y+1}\] should permute $\mu_{q+1}$.
This function is the identity function $y \mapsto y$ so long as $\mu_{q+1}$ contains no roots of $Y^2+Y+1$,
which is the case since those roots have order $3$.  Hence $g(X)$ induces the
same function on $\mu_{q+1}$ as does $X^N$, so that $g$ permutes $\mu_{q+1}$.
\end{proof}


\section{The case that $n=2$ and $k=(q+1)^2/4$}
\label{kyureg-zieve:case4}

We now prove case (d) of Theorem~\ref{main}.

\begin{theorem}\label{case4}
Let  $q\equiv 1\pmod{4}$, and let $\gamma \in \F_{q^2}$ satisfy
$\left(2\gamma\right)^{(q+1)/2}= 1$. Then
\[
F_4(X):= X +\gamma\, \Tr_{q^2/q}(X^{(q+1)^2/4})
\]
permutes\/ $\F_{q^2}$.
\end{theorem}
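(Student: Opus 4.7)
The plan is to apply Proposition~\ref{red-sq2} with $N=(q+3)/4$, which is a positive integer because $q\equiv 1\pmod 4$ and satisfies $(q-1)N+1=(q+1)^2/4=k$ together with $2N-1=(q+1)/2$ and $N-1=(q-1)/4$. The theorem then reduces to showing that
\[
H(X)=\frac{X^{N}+\gamma^{q}\bigl(1+X^{(q+1)/2}\bigr)}{X^{N-1}+\gamma\bigl(1+X^{(q+1)/2}\bigr)}
\]
is pole-free and injective on $\mu_{q+1}$.

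The key idea, previewed in the introduction, is to split $\mu_{q+1}$ into its index-$2$ subgroup $S$ of squares (where $X^{(q+1)/2}=1$) and the complementary coset $T$ of nonsquares (where $X^{(q+1)/2}=-1$), and to analyze $H$ on each piece separately. On $T$ the $\gamma$-contributions cancel in both numerator and denominator, giving $H(X)=X^{N}/X^{N-1}=X$, so $H$ acts as the identity on $T$. For $X\in S$, setting $c:=2\gamma$ makes the rational function collapse to $(X^{N}+c^{q})/(X^{N-1}+c)$, and I would verify by direct computation that this equals $c^{q}X^{N}$: clearing denominators reduces the claim to $c^{q}X^{2N-1}+c^{q+1}X^{N}=c^{q}+X^{N}$, which holds because $X^{2N-1}=X^{(q+1)/2}=1$ for $X\in S$ and $c^{q+1}=\bigl(c^{(q+1)/2}\bigr)^{2}=1$ by hypothesis. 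The hypothesis on $\gamma$ is used precisely to force $c\in S\subset\mu_{q+1}$.

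To finish, I would show that the map $X\mapsto c^{q}X^{N}$ permutes $S$: it sends $S$ into itself because both $c^{q}$ and $X^{N}$ lie in $S$ when $X\in S$, and it is a bijection on $S$ because $|S|=(q+1)/2$ and $\gcd(N,(q+1)/2)=1$ by the identity $2N-(q+1)/2=1$. Combined with the identity action on $T$, this shows $H$ permutes $\mu_{q+1}$. The remaining pole check is trivial on $T$, and on $S$ reduces to ruling out $X^{N-1}=-c$; squaring yields $X^{(q-1)/2}=c^{2}$, which together with $X^{(q+1)/2}=1$ forces $X=c^{-2}$, and substituting back while using $c^{q+1}=1$ gives $X^{N-1}=c\ne -c$, a contradiction because $q$ is odd.

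The main conceptual obstacle is that $H$ has no clean uniform description on all of $\mu_{q+1}$; the key insight is that partitioning $\mu_{q+1}$ by the value of $X^{(q+1)/2}$ causes the $\gamma$-terms either to vanish identically (on $T$) or to conspire with the hypothesis $c^{q+1}=1$ to produce a pure monomial (on $S$). Once this dichotomy is spotted, the remaining work is routine bookkeeping with the identities $X^{(q+1)/2}=\pm 1$ and $c^{q+1}=1$.
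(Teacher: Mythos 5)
Your proof is correct and follows essentially the same route as the paper: the paper likewise reduces to a map on $\mu_{q+1}$ (via Proposition~\ref{zlem} rather than Proposition~\ref{red-sq2}), splits $\mu_{q+1}$ into squares and nonsquares by writing each element as $\pm y^2$ with $y\in\mu_{(q+1)/2}$, and shows the map acts as a constant times $X$ on one coset and a constant times $X^N$ on the other, using the same key identity $(2\gamma)^{q+1}=1$. One tiny nitpick: in your final pole check, deducing $X^{N-1}=c$ from $X=c^{-2}$ requires the full hypothesis $c^{(q+1)/2}=1$ rather than merely $c^{q+1}=1$ (the latter alone only yields $X^{N-1}=\pm c$), but since that is exactly the stated hypothesis the argument stands.
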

\begin{proof}
By Proposition~\ref{zlem}, it suffices to show that
\[
g(X):=X(\gamma^{-1}+X^N+X^{qN+1})^{q-1}
\]
induces a bijection on $\mu_{q+1}$ in case $N=\frac{q+3}4$.
Note that every element of $\mu_{q+1}$ can be written in exactly one way as $\pm y^2$
with $y \in \mu_{(q+1)/2}$.
By hypothesis $2\gamma$ is a square in $\mu_{q+1}$, so that $-2\gamma$ is a nonsquare in $\mu_{q+1}$,
and thus $\gamma^{-1}+2y$ is nonzero for each $y \in \mu_{(q+1)/2}$.
For $y \in \mu_{(q+1)/2}$ we compute
\[
(y^2)^{N}=y^{(q+3)/2}=y \mbox{ and }
(y^2)^{qN+1}=y^{q+2}=y
\]
so that
\begin{eqnarray*}
g(-y^2) &= & -y^2 (\gamma^{-1} + y (-1)^N + y (-1)^{qN+1})^{q-1} \\
  &= & -y^2 \gamma^{1-q} = -(2y\gamma)^2,
\end{eqnarray*}
and
\begin{eqnarray*}
g(y^2) &= & y^2 (\gamma^{-1} + y + y)^{q-1} =
 y^2\, \frac{({\gamma^{-1}}+2y)^q}{{\gamma^{-1}}+2y} \\ 
  &=& y^2\,\frac{4\gamma+2y^{-1}}{\gamma^{-1}+2y} = 2\gamma y.
\end{eqnarray*}
Since $2\gamma$ is in $\mu_{(q+1)/2}$, and squaring is a bijective map on $\mu_{(q+1)/2}$, it follows that
$g$ induces a bijection on $\mu_{(q+1)/2}$ and also a bijection on $-\mu_{(q+1)/2}$, so that $g$ induces
a bijection on $\mu_{q+1}$ as desired.
\end{proof}

\begin{remark}
In fact, the polynomial $F_4$ fixes each nonsquare in $\F_{q^2}$. Moreover, we have
 \begin{eqnarray*}
F_4(x) & = & \left\{ \begin{array}{ll} x +2\gamma \cdot x^{\frac{(q+1)^2}{4}} & \mbox{ if $x$ is a square in } \F_{q^2} \\  x & \mbox{ if $x$ is a nonsquare in } \F_{q^2}.
\end{array} \right .
\end{eqnarray*}
To see this, note that for any $x\in\F_{q^2}$ the element $x^{(q^2-1)/4}$ is either zero or a fourth root of unity, and since $q\equiv 1\pmod{4}$ it follows that $x^{(q^2-1)/4}$ lies in the subfield $\F_q$.  Thus for $x\in\F_{q^2}$ we have
\[
F_4(x) =  x + \gamma \,\Tr_{q^2/q}(x^{\frac{q^2-1}{4}}x^{\frac{q+1}{2}}) = x + \gamma \,x^{\frac{q^2-1}{4}}\,\Tr_{q^2/q}(x^{\frac{q+1}{2}}).
\]
Our claimed expression for $F_4(x)$ now follows from the fact that
\begin{eqnarray*}
\Tr_{q^2/q}(x^{\frac{q+1}{2}}) & = & x^{\frac{q+1}{2}} +  x^{\frac{q^2+q}{2}} \\
& = & x^{\frac{q+1}{2}} (1 +  x^{\frac{q^2-1}{2}}) \\
& = & \left\{ \begin{array}{ll} 2x^{\frac{q+1}{2}} & \mbox{ if $x$ is a square in } \F_{q^2} \\ 0 & \mbox{ if $x$ is a nonsquare in } \F_{q^2}.
\end{array} \right .
\end{eqnarray*}

\end{remark}


\section{The case  that $n=2$ and $q=Q^2$ where $Q$ is an odd prime power and $k=Q^3-Q+1$}
\label{kyureg-zieve:case5}

In this section we prove case (e) of Theorem~\ref{main}, by showing that
\[
 F_5(X) := X - \Tr_{Q^4/Q^2}(X^{Q^3-Q+1})
\]
is a permutation on $\F_{Q^4}$, when $Q$ is an odd prime power.
Our proof relies on a variant of Dobbertin's method~\cite{D}.
We also exhibit certain sparse rational functions which permute either $\F_{Q^2}$ or the set of
$(Q^2+1)$-th roots of unity in $\F_{Q^4}^*$.

\begin{theorem}\label{case5}
For any odd prime power $Q$, we have:
\begin{description}
\item[(a)] $F_5(X) = X - \Tr_{Q^4/Q^2}(X^{Q^3-Q+1})$ 
permutes\/ $\F_{Q^4}$. \vspace*{0.2cm}
\item[(b)]  
$
\displaystyle{ \frac{X^{Q+2} + 3\nu X^Q + 4\nu^{(Q+1)/2}X }{X^{2} - \nu} }
$
permutes\/  $\F_{Q^2}$, where $\nu$ is any nonsquare in\/ $\F_{Q^2}^*$.  \vspace*{0.2cm}
\item[(c)]
$
\displaystyle{\frac{X^{2Q-1} - X^Q +1}{X^{2Q-1} - X^{Q-1}+1}} $ permutes the set of
$(Q^2+1)$-th roots of unity~in~$\F_{Q^4}^*$.
\end{description}
\end{theorem}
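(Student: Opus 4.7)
Proof plan: I would first reduce the three statements to each other via the propositions from Section~\ref{kyureg-zieve:general}, so that only one of them (say (c)) needs to be proved directly, and then attack (c) by a variant of Dobbertin's method.

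For the equivalences, applying Proposition~\ref{red-sq2} with $q = Q^2$, $\gamma = -1$, and $N = Q$ produces exactly the exponent $k = (q-1)N + 1 = Q^3 - Q + 1$, and the resulting rational function $H(X)$ is, up to a common sign in numerator and denominator (using $(-1)^q = -1$ since $q = Q^2$ is odd), the function appearing in (c); so (a) is equivalent to (c). Next, I would apply Proposition~\ref{red-sq1} with $\gamma = -1$ and $f(X) = \Tr_{Q^4/Q^2}(X^{Q^3-Q+1})$: the $\F_{Q^2}$-linearity of $f$ holds since $Q^3 - Q + 1 \equiv 1 \pmod{Q^2 - 1}$, and $f(-1) = -2 \ne -1$ since $Q$ is odd. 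Taking $\omega \in \F_{Q^4}$ with $\omega^{Q^2} = -\omega$, so that $\nu := \omega^2$ ranges over all nonsquares of $\F_{Q^2}^*$, a direct computation of $(\omega \pm x)^{Q^3 - Q + 1}$ using $(\omega \pm x)^{Q^2} = \mp(\omega \mp x)$ (and the fact that $Q^3 - Q + 1$ is odd) expresses $x + f(\omega - x)$ as a rational function on $\F_{Q^2}$; after an appropriate rescaling of the variable this should agree with the rational function of (b), yielding the equivalence of (a) and (b).

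It then remains to prove (c). Well-definedness is easy: if $d(\alpha) := \alpha^{2Q-1} - \alpha^{Q-1} + 1 = 0$ for some $\alpha \in \mu_{Q^2+1}$, then applying $x \mapsto x^{Q^2}$ and using $\alpha^{Q^2} = 1/\alpha$ transforms $d(\alpha) = 0$ into $\alpha^{2Q-1} - \alpha^Q + 1 = 0$; subtracting gives $\alpha^{Q-1}(\alpha - 1) = 0$, forcing $\alpha = 1$, which contradicts $d(1) = 1$. For injectivity, suppose $H(a) = H(b)$ with $a, b \in \mu_{Q^2+1}$. Introducing variables $A = a^Q$, $B = b^Q$ and clearing denominators yields a polynomial relation in $a, b, A, B$, and membership in $\mu_{Q^2+1}$ furnishes the additional Frobenius constraints $A^Q a = B^Q b = 1$. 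The plan is to deduce $a = b$ from this system, using the variant of Dobbertin's method described in the introduction: a sequence of Gr\"obner basis computations in polynomial rings whose graded structure separates the ``horizontal'' relations among $a, b, A, B$ from the ``vertical'' Frobenius-twist relations.

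The main obstacle is executing this strategy uniformly in $Q$. Naive elimination of $A$ and $B$ using $A^Q = 1/a$, $B^Q = 1/b$ produces polynomials whose degrees in $a, b$ grow with $Q$, so the system must be closed under the natural Frobenius action $(a, b, A, B) \mapsto (A, B, 1/a, 1/b)$ on $\mu_{Q^2+1}^2$, and one must find a sequence of polynomial combinations that produces, modulo this Frobenius-closed ideal, an identity of the form $P \cdot (a - b) = 0$ with $P$ easily checked to be nonzero on $\mu_{Q^2+1}^2$ off the diagonal. Finding this identity is the genuinely novel step of the proof, corresponding to what the authors describe as the ``new features'' of their variant of Dobbertin's method.
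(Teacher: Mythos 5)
Your reduction of (a), (b), (c) to one another is sound and matches the paper's (the paper likewise dispatches the equivalences by citing Propositions~\ref{red-sq1} and \ref{red-sq2}, and your choice $N=Q$, $\gamma=-1$ in Proposition~\ref{red-sq2} does reproduce the function in (c) up to the common sign $\gamma^q=-1$). Your well-definedness argument for (c) is correct and complete: applying $x\mapsto x^{Q^2}$ to $\alpha^{2Q-1}-\alpha^{Q-1}+1=0$ and clearing denominators does give $\alpha^{2Q-1}-\alpha^Q+1=0$, and subtracting forces $\alpha=1$, a contradiction. This plays the role of the paper's Lemma~\ref{zero}.

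The genuine gap is that the injectivity argument --- the entire substance of the theorem --- is not carried out. You correctly identify that one must set up a Frobenius-closed polynomial system and extract an identity of the form $P\cdot(a-b)=0$, but you then state that finding this identity ``is the genuinely novel step of the proof'' and stop. That step is precisely what the paper does, and it is not a formality: the paper introduces the four conjugates $x,y,z,w$ of a putative preimage and $d,e,f,g$ of the target, eliminates $w$ and $y$, runs a Gr\"obner basis computation to obtain the linear relation $z=x-d+f$ together with a degree-$5$ polynomial $C(X)$ vanishing at every preimage, factors $C$ as $(X-x)D(X)E(X)$, and then uses the Frobenius constraint $(x')^Q=G(x')$ and a cascade of resultant computations to exclude a second root of $D$ or $E$. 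Moreover, the degenerate loci that you wave away (``$P$ easily checked to be nonzero off the diagonal'') are exactly where the paper must work hardest: it needs Lemma~\ref{claim:3} to handle a characteristic-$3$ exceptional set where $x^2-xz+z^2=0$, it must separately rule out $x^2y^2-xyzw+z^2w^2=0$ and the vanishing of the leading coefficient of an auxiliary quadratic, and the final contradiction rests on the arithmetic fact that $\F_{Q^4}^*$ contains no $(2Q^2-2)$-th root of $-1$. None of these cases is anticipated in your plan, and without producing the explicit eliminants and checking each degeneration, the proof of (c) (equivalently (a)) is not established. Your two-variable formulation on $\mu_{Q^2+1}$ with $a,b,A,B$ and the relations $A^Qa=B^Qb=1$ is a reasonable alternative starting point (the function does become the fixed-degree expression $(A^2-Aa+a)/(A^2-A+a)$), but it remains an untested reformulation rather than a proof.
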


We begin with some simple lemmas about the preimages under $F_5$ of some special values.

\begin{lemma}\label{zero}
The only root of $F_5(X)$ in\/ $\F_{Q^4}$ is $0$.
\end{lemma}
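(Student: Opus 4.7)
My plan is a short two-step argument. First, I would observe that for any $y\in\F_{Q^4}$ the element $\Tr_{Q^4/Q^2}(y)$ lies in the subfield $\F_{Q^2}$. Consequently, the equation $F_5(x)=0$ rewrites as $x=\Tr_{Q^4/Q^2}(x^{Q^3-Q+1})$, which forces $x\in\F_{Q^2}$. Thus every root of $F_5$ already lives in $\F_{Q^2}$, and it is enough to analyze $F_5$ restricted to that subfield.

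Second, I would compute $F_5$ on $\F_{Q^2}$ directly. For $x\in\F_{Q^2}$ the identity $x^{Q^2}=x$ gives $x^{Q^3}=x^Q$, and hence $x^{Q^3-Q+1}=x^Q\cdot x^{1-Q}=x$ (the case $x=0$ being trivial). Therefore $\Tr_{Q^4/Q^2}(x)=x+x^{Q^2}=2x$, so that $F_5(x)=x-2x=-x$. Since $Q$ is odd, $2\ne 0$ in $\F_{Q^4}$, and $F_5(x)=0$ forces $x=0$.

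There is no real obstacle in this lemma: the whole proof rests on the single observation that the right-hand side of the defining expression automatically lands in the subfield, which collapses the problem to an elementary identity on $\F_{Q^2}$. Presumably the genuinely difficult content of Theorem~\ref{case5} will appear in the later lemmas that handle preimages of non-zero values via the Dobbertin/Gr\"obner-basis strategy announced in the introduction.
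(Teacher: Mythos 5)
Your proof is correct, and it is genuinely different from (and considerably shorter than) the argument in the paper. The key observation you make --- that $\Tr_{Q^4/Q^2}$ takes values in $\F_{Q^2}$, so any root of $F_5$ satisfies $x=\Tr_{Q^4/Q^2}(x^{Q^3-Q+1})\in\F_{Q^2}$ --- immediately collapses the problem, since $Q^3-Q+1\equiv 1\pmod{Q^2-1}$ gives $x^{Q^3-Q+1}=x$ and hence $F_5(x)=x-2x=-x$ on $\F_{Q^2}$. (Your aside that $2\ne 0$ is not even needed: $-x=0$ forces $x=0$ in any characteristic.) The paper instead supposes $x\in\F_{Q^4}^*$ is a root, passes to $y:=x^{Q^2-1}\in\mu_{Q^2+1}$ satisfying $y^Q+y^{1-Q}=1$, applies Frobenius to derive $(y^{Q+2}+1)(y^Q-1)=0$, and then rules out the resulting possibilities by an order computation involving $\gcd(Q+2,5)$; in your language, that argument is essentially a roundabout way of showing $y=1$ is forced and impossible, whereas you get $y=x^{Q^2-1}=1$ for free from the subfield containment and then check directly that such $x$ cannot be a nonzero root. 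Your route is the cleaner one for this lemma; the paper's multiplicative manipulations of $\mu_{Q^2+1}$ foreshadow the techniques used in the rest of the proof of Theorem~\ref{case5} but are not needed here.
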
 
\begin{proof}
Suppose to the contrary that $x\in \F_{Q^4}^*$ is a root of $F_5$.  Then 
$$x^{Q^3-Q} + x^{-Q^3+Q^2+Q-1} = 1,$$
and  $y:=x^{Q^2-1}$ is an element of $\mu_{Q^2+1}$ satisfying  
\begin{equation} \label{eq:m}
 y^Q + y^{1-Q} = 1.
\end{equation}
In particular, $y$ cannot be $\pm 1$.
Since $y^{Q^2}=1/y$, we obtain
$$  y^{-1} + y^{Q+1} = (y^Q + y^{1-Q})^Q = 1^Q = 1^{Q^3} = (y^Q+y^{1-Q})^{Q^3} = y + y^{-Q-1},$$
or equivalently
$$
(y^{Q+2}+1) \cdot (y^{Q}-1) =0.
$$
Since $y \ne 1$, we obtain $y^{Q+2}=-1$, so that $y^{2Q+4}=1$.  Hence the order of $y$ divides 
\[
\gcd(2Q+4,Q^2+1)=2\gcd(Q+2,Q^2+1)=2\gcd(Q+2,5),
\]
so that $y^{10}=1$.
Since $y^{Q+2}=-1$, it follows that $y^5=-1$.
Now (\ref{eq:m}) simplifies to
$$
-y^{-2} - y^3 = 1,
$$
and since $y^5=-1$ this yields the contradiction $0=1$.
\end{proof}

\begin{lemma} \label{claim:3}
The set $S$ of elements $x\in\F_{Q^4}^*$ for which $x^{2Q^2}-x^{Q^2+1}+x^2=0$ is nonempty only
when $3\mid Q$, in which case $S$ consists of the $(Q^2-1)$-th roots of $-1$ and
$F_5$ fixes each element of $S$. 
\end{lemma}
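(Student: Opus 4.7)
My plan is to reduce the defining equation to a quadratic in $y := x^{Q^2-1}$, show that this quadratic has a root in $\mu_{Q^2+1}$ only in characteristic $3$, and then verify the fixed-point property by a direct trace computation.

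First, since $x \in \F_{Q^4}^*$, I would divide the relation $x^{2Q^2} - x^{Q^2+1} + x^2 = 0$ by $x^2$ to obtain $(x^{Q^2-1})^2 - x^{Q^2-1} + 1 = 0$, i.e.\ $y^2 - y + 1 = 0$ with $y := x^{Q^2-1}$. Since $y^{Q^2+1} = x^{Q^4-1} = 1$, every solution $y$ lies in the subgroup $\mu_{Q^2+1}$ of $\F_{Q^4}^*$.

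Next I would handle the two characteristic cases separately. When $\gcd(Q,3) = 1$ the polynomial $y^2 - y + 1$ is separable, and its roots are the primitive sixth roots of unity; such a root lies in $\mu_{Q^2+1}$ only if $3 \mid Q^2+1$, but $Q^2 \equiv 1 \pmod 3$ forces $3 \nmid Q^2+1$, so $S$ is empty in this case. When $3 \mid Q$ (so we are in characteristic $3$), the same polynomial becomes $(y+1)^2$, so $y = -1$ is the unique solution, and translating back via $y = x^{Q^2-1}$ identifies $S$ with the set of $(Q^2-1)$-th roots of $-1$ in $\F_{Q^4}^*$.

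Finally, to verify that $F_5$ fixes each such $x$, I would expand
\[
\Tr_{Q^4/Q^2}(x^{Q^3-Q+1}) = x^{Q^3-Q+1} + x^{Q^2+Q-Q^3},
\]
where the second exponent arises from reducing $Q^2(Q^3-Q+1)$ modulo $Q^4-1$ using $x^{Q^4} = x$. Factoring out $x^{Q^2+Q-Q^3}$ leaves the factor $x^{(2Q-1)(Q^2-1)} + 1$, and since $x^{Q^2-1} = -1$ and $2Q - 1$ is odd, this factor equals $(-1)^{2Q-1} + 1 = 0$. Hence the trace vanishes and $F_5(x) = x$, as required. The one bit that requires care is spotting the factorization $2Q^3 - Q^2 - 2Q + 1 = (2Q-1)(Q^2-1)$ of the exponent difference; beyond that the argument is essentially forced.
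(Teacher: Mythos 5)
Your proposal is correct and follows essentially the same route as the paper: substitute $y=x^{Q^2-1}\in\mu_{Q^2+1}$ to get $y^2-y+1=0$, rule out primitive sixth roots of unity in $\mu_{Q^2+1}$ when $3\nmid Q$, observe the quadratic becomes $(y+1)^2$ in characteristic $3$, and then check the trace vanishes on $S$. Your fixed-point computation via exponent reduction and the factorization $(2Q-1)(Q^2-1)$ is slightly more laborious than the paper's one-liner $\Tr_{Q^4/Q^2}(x^k)=x^k+(-x)^k=0$ (using $x^{Q^2}=-x$ and $k$ odd), but it is the same underlying argument.
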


\begin{proof}
Write $u:=x^{Q^2-1}$ with $x\in S$, so that $u^{Q^2+1}=1$ (since plainly $x\ne 0$).
If $3\nmid Q$ then the equation $u^2-u+1=0$ implies that $u$ is a primitive sixth root
of unity, which is impossible since $6\nmid (Q^2+1)$. 
Hence $3\mid Q$, so that $X^{2Q^2}-X^{Q^2+1}+X^2=(X^{Q^2}+X)^2$ and thus $S$ consists
of the $(Q^2-1)$-th roots of $-1$.  Therefore for $x\in S$ and $k:=Q^3-Q+1$ we have
$F_5(x)=x-x^k-x^{Q^2k}=x-x^k-(-x)^k=x$.
\end{proof}

\begin{proof}[Proof of Theorem~\ref{case5}] Write $q:=Q^2$. 
The equivalence of (a), (b) and (c) follows from Propositions~\ref{red-sq1} and \ref{red-sq2} in the same manner as in our previous results.
In the remainder of the proof we show that $F_5$ is bijective on $\F_{Q^4}$.  In light of Lemma~\ref{zero}, it suffices to show that 
each $d\in\F_{Q^4}^*$ has at most one preimage $x$ in $\F_{Q^4}^*$.

Pick any $d$ in $\F_{Q^4}^*$, and write 
\[
e:=d^Q, \,f:=d^{Q^2}, \,g:=d^{Q^3}.
\]
Let $x$ be an element of $\F_{Q^4}^*$ for which $F_5(x)=d$, and write 
\[
y:=x^Q, \,z:=x^{Q^2}, \,w:=x^{Q^3}.
\]
The equations $F_5(x^{Q^i})=d^{Q^i}$ for $i=0,1,2,3$ may be written as
\begin{align}
\label{(1)} x-\frac{xw}{y}-\frac{yz}{w}&=d \\
 \label{(2)} y-\frac{yx}{z}-\frac{zw}x&=e\\
   \label{(3)} z-\frac{zy}w-\frac{wx}y&=f \\
    \label{(4)} w-\frac{wz}x-\frac{xy}z&=g. 
\end{align}

By Lemma~\ref{claim:3}, if the set $S:=\{u\in\F_{Q^4}\colon u^{2Q^2}-u^{Q^2+1}+u^2=0\}$ is nonempty then $3\mid Q$
and $S$ consists of the $(Q^2-1)$-th roots of $-1$, each of which is fixed by $F_5$.
We claim that if $d\in S$ then $x\in S$.
For, if $d\in S$ then $3\mid Q$ and $f=-d$, so that the sum of the left sides of (\ref{(1)}) and (\ref{(3)}) is zero.
The numerator of this sum factors as $(w+y)(wx+zy)$, so we must have either $w=-y$ or $wx=-zy$.
If $w=-y$ then $y^{Q^2-1}=-1$, so that $y\in S$ and hence $x=y^{Q^3}\in S$.
If $wx=-zy$ then $(yz)^{Q^2-1}=-1$, so that $y^{(Q+1)(Q^2-1)}=-1$; but since $y\in\F_{Q^4}^*$ it follows that
the order of $y$ divides $\gcd(2(Q+1)(Q^2-1),Q^4-1)=2(Q^2-1)$, so that $y^{(Q+1)(Q^2-1)}=1\ne -1$, a contradiction.
Thus if $d\in S$ then $x\in S$.

Since $F_5$ fixes every element of $S$, and maps $T:=\F_{Q^4}^*\setminus S$ into itself, it remains to show that
each $d\in T$ has at most one preimage in $T$ under $F_5$.  Thus we assume henceforth that $d,x\in T$, so that
$f^2-df-d^2\ne 0$ and $z^2-xz+x^2\ne 0$, and raising to the $Q$-th power yields
 $g^2-eg-e^2\ne 0$ and $w^2-yw+y^2\ne 0$.
Since $z\ne 0$  we can use (\ref{(2)}) to solve for $w$, obtaining
\begin{equation}\label{(5)}
w = (y-\frac{yx}z-e)\frac{x}z.
\end{equation}
Substituting (\ref{(5)}) into (\ref{(4)}) and simplifying yields
\begin{equation} \label{(6)}
 y = -z\frac{e(x-z)+gz}{x^2+z^2-xz}.
\end{equation}

Substituting (\ref{(5)}) and (\ref{(6)}) into (\ref{(1)}) and (\ref{(3)}) yields $A,B\in\F_{Q^4}(X,Z)$ such that
$A(x,z)=B(x,z)=0$.  Now compute a Gr\"obner basis for the ideal of $\F_{Q^4}[Z',X',V',Z,X]$ generated by the
numerators of $A(X,Z)$ and $B(X,Z)$ as well as the elements $ZZ'-1$, $XX'-1$ and $(X^2-XZ+Z^2)V'-1$.
Since each element of this ideal vanishes when we substitute $\frac{1}z,\frac{1}x,\frac{1}{x^2-xz+z^2},z,x$
for $Z',X',V',Z,X$, we may make this substitution into each element of the Gr\"obner basis to conclude that 
\begin{equation} \label{(7)}
z = x - d + f
\end{equation}
and $C(x)=0$ for a certain degree-$5$ polynomial $C(X)\in\F_{Q^4}[X]$.

Suppose that $x'\in T\setminus\{x\}$ satisfies $F_5(x')=F_5(x)$; then we must have $C(x')=0$.
The coefficients of $C(X)$ are rational functions in $d,e,f,g$; by using (\ref{(1)})--(\ref{(4)})
we may replace these by rational functions in $x,y,z,w$, and after multiplying by a suitable polynomial
in $x,y,z,w$ we can rewrite the resulting polynomial as $(X-x)D(X)E(X)$ where
\begin{align*}
D(X)&:=yw(x^2 - xz + z^2)X^2 + xyz(y-w)(x-z)^2  \\
&\qquad\quad + (z-x)(x^2yw + xy^2z - xyzw - xzw^2 + yz^2w)X  \\
E(X)&:=(x^2y^2-xyzw+z^2w^2)X^2 + wz(y-w)(x-z)^3 \\
&\qquad\quad + (z-x)(x^2yw+xy^2z-xyzw - xzw^2-yz^2w+2z^2w^2)X.
\end{align*}
Hence $x'$ is a root of at least one of $D(X)$ and $E(X)$.
We must have $z\ne x$ (and equivalently $w\ne y$), since otherwise
$D(X)=x^2y^2X^2=E(X)$, which is impossible since $x'$ is a nonzero root of one of these polynomials.
Also $x^2y^2-xyzw+z^2w^2\ne 0$, since otherwise
$(xy)^2-(xy)^{Q^2+1}+(xy)^{2Q^2}=0$ so that $(xy)^{Q^2-1}=-1$, but then the order of $x$
divides both $2(Q+1)(Q^2-1)$ and $Q^4-1$ and hence divides $2(Q^2-1)$, which yields the contradiction
$(xy)^{Q^2-1}=1\ne -1$.
Thus both $D$ and $E$ have degree $2$.

If $D(x')=0$ then $D(x')^Q=0$, so that $(x')^Q$ is a root of the polynomial $D_2(X)$ obtained from $D(X)$ by replacing
each coefficient by its $Q$-th power.  Equations (\ref{(6)}) and (\ref{(7)}) imply that $y=G(x)$ where
\[
G(X):=-(X-d+f)\frac{e(d-f)+g(X-d+f)}{X^2+(X-d+f)^2-X(X-d+f)}.
\]
Since these equations were
deduced from the identity $F_5(x)=d$, it follows that $(x')^Q=G(x')$, so that $x'$ is a root of the numerator of $D_2(G(X))$.
This numerator factors as $E(X)H(X)$ where
\begin{align*}
H(X)&:=
(x^2y^2-xy^2z+xyzw-xzw^2+z^2w^2)X^2 + x^2y(x-z)^2(y-w) \\
&\qquad\quad + (z-x)(2x^2y^2 - x^2yw - xy^2z + xyzw - xzw^2 + yz^2w)X.
\end{align*}
Thus $x'$ is a root of at least one of $E(X)$ and $H(X)$.  If $E(x')=0$ then $x'$ is a common root of $D(X)$ and $E(X)$, so
the resultant of $D(X)$ and $E(X)$ must vanish.  
This resultant equals $-xz(x-z)^4(y-w)^2(x^2y^2-xyzw+z^2w^2)u^{Q+1}$ where
\[
u:=x^2y^2 - x^2yw + xyzw - yz^2w + z^2w^2,
\]
so we must have $u=0$.
But then a routine computation shows that $D(X)=yw(x^2-xz+z^2)(X-x)^2$,
which yields the contradiction $x'=x$.  Thus if $D(x')=0$ then we must have $H(x')=0$.
If $H$ has degree $2$ then as above the resultant of $D(X)$ and $H(X)$ must vanish, so that
\[
0 = xyzw(x-z)^6(y-w)^2(x^2y^2-xyzw+z^2w^2)^{Q+1},
\]
which we know is false.  Thus $\deg H<2$.
Since the constant term of $H(X)$ is nonzero, the condition $H(x')=0$ forces $\deg H=1$, so  the
resultant of $D(X)$ and $H(X)$ is $xy(x-z)^4(w-y)v$ where
\begin{align*}
v&:= x^5y^3w - 2x^4y^4z + x^4y^3zw - 2x^4y^2zw^2 + 3x^3y^4z^2 - 
    3x^3y^3z^2w \\ &\quad + 6x^3y^2z^2w^2 
     - 2x^3yz^2w^3 + x^3z^2w^4 - x^2y^4z^3
    - 2x^2y^3z^3w - x^2y^2z^3w^2 \\ &\quad - x^2z^3w^4 + 2xy^3z^4w - xy^2z^4w^2
    + 2xyz^4w^3 - y^2z^5w^2.
\end{align*}
Thus we must have $v=0$, and also the coefficient of $X^2$ in $H(X)$ must vanish.
But we can express $yz^6w^2(y-w)^3(y^2+w^2)$ as a sum of the products of these two quantities
with certain polynomials in $x,y,z,w$, so that this expression vanishes and thus $y^2=-w^2$, whence
$y^{2Q^2-2}=-1$.  However, there are no $(2Q^2-2)$-th roots of $-1$ in $\F_{Q^4}^*$, since $2Q^2-2$
is divisible by the largest power of $2$ which divides $Q^4-1$.

This completes the proof when $D(x')=0$, and the proof when $E(x')=0$ is similar.
 \end{proof}



\section{The case  that $n=2$ and $q=Q^2$ where $Q$ is an odd prime power and $k=Q^3+Q^2-Q$}
\label{kyureg-zieve:case6}

Case (f) of Theorem~\ref{main} is contained in the following result.

\begin{theorem}\label{case6}
For any odd prime power $Q$, we have:
\begin{description}
\item[(a)]  $F_6(X) := X - \Tr_{Q^4/Q^2}(X^{Q^3+Q^2-Q})$
permutes\/ $\F_{Q^4}$. \vspace*{0.2cm}
\item[(b)] 
$
\displaystyle{ \frac{X^{Q+2} + 3\nu X^Q - 4\nu^{(Q+1)/2}X }{X^{2} - \nu} }
$
permutes\/ $\F_{Q^2}$, where $\nu$ is any nonsquare in\/ $\F_{Q^2}$.  \vspace*{0.2cm}
\item[(c)]
$
\displaystyle{\frac{X^{2Q+1} - X^{Q+1} +1}{X^{2Q+1} - X^{Q}+1}} $ permutes the set of $(Q^2+1)$-th
roots of unity in~$\F_{Q^4}^*$.
\end{description}
\end{theorem}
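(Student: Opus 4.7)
The plan is to follow the strategy of Theorem~\ref{case5} closely, adapting the calculations to the new exponent $k = Q^3 + Q^2 - Q$. Since $k - 1 = (Q^2-1)(Q+1)$, Proposition~\ref{red-sq2} applies with $q := Q^2$ and $N := Q + 1$; substituting $\gamma = -1$ into the rational function $H$ produced by that proposition, and simplifying using $\gamma^q = -1$, yields exactly the expression displayed in~(c), so (a) and (c) are equivalent. For the equivalence of (a) and (b), I would invoke Proposition~\ref{red-sq1} with $\gamma = -1$, $f(X) := \Tr_{Q^4/Q^2}(X^k)$, and $\omega \in \F_{Q^4}^*$ satisfying $\omega^{Q^2} = -\omega$; its hypotheses are met because $k \equiv 1 \pmod{Q^2-1}$ and $f(-1) = -2 \ne -1$. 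A direct computation of $x + \Tr_{Q^4/Q^2}((\omega - x)^k)$, in the style of Theorem~\ref{theorem1}, then converts this function into the form displayed in~(b), with $\nu := \omega^2$ running over all nonsquares of $\F_{Q^2}$ as $\omega$ varies.

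To prove (a) directly, I would use Dobbertin's method as in Theorem~\ref{case5}. The analog of Lemma~\ref{zero} --- that $F_6$ vanishes only at $0$ --- is equivalent to showing that the denominator $X^{2Q+1} - X^Q + 1$ in~(c) has no root in $\mu_{Q^2+1}$; this can be verified by setting $y := x^{Q^2-1}$, raising the hypothetical identity $y^{2Q+1} - y^Q + 1 = 0$ to suitable powers with the help of $y^{Q^2} = y^{-1}$, and extracting order constraints on $y$ that are incompatible with $y \in \mu_{Q^2+1}$. For each fixed $d \in \F_{Q^4}^*$, I would then parametrise $d$ and any candidate preimage $x$ by the Frobenius conjugates $e := d^Q$, $f := d^{Q^2}$, $g := d^{Q^3}$ and $y := x^Q$, $z := x^{Q^2}$, $w := x^{Q^3}$. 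The identities $x^k = wz/y$ and $x^{Q^2 k} = xy/w$, both immediate from $k = Q^3 + Q^2 - Q$, recast the four Frobenius conjugates of $F_6(x) = d$ as the system
\begin{align*}
x - \frac{wz}{y} - \frac{xy}{w} = d, &\qquad y - \frac{xw}{z} - \frac{yz}{x} = e,\\
z - \frac{xy}{w} - \frac{wz}{y} = f, &\qquad w - \frac{yz}{x} - \frac{xw}{z} = g.
\end{align*}

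A pleasant feature distinguishing this case from Theorem~\ref{case5} is that the first and third equations have identical nonlinear parts, so their difference immediately yields the linear relation $x - z = d - f$; by symmetry the second and fourth equations give $y - w = e - g$. This is equivalent to $x - d \in \F_{Q^2}$, which simply restates the fact --- implicit in Proposition~\ref{prop-surj}(b) --- that because $\gamma = -1 \in \F_{Q^2}$ the map $F_6$ sends each coset of $\F_{Q^2}$ in $\F_{Q^4}$ into itself. Setting $t := x - d \in \F_{Q^2}$ and $s := t^Q$, substituting $x = d + t$, $y = e + s$, $z = f + t$, $w = g + s$ into the first equation, and clearing the denominator $(e+s)(g+s)$, reduces the problem to showing that a polynomial equation of the form
\[
s^{Q+2} + c_1 s^{Q+1} + c_2 s^Q + c_3 s^2 + c_4 s + c_5 = 0
\]
(with coefficients $c_i \in \F_{Q^2}$ depending on $d$) has at most one root $s \in \F_{Q^2}$. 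I expect this injectivity step to be the main obstacle: I would adapt the Gr\"obner-basis and resultant machinery of Theorem~\ref{case5}, introducing the companion equation obtained by applying Frobenius together with auxiliary variables to invert the relevant denominators, and separately handling a small exceptional set of parameter values in the spirit of Lemma~\ref{claim:3}.
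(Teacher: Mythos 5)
Your proposal follows the same overall strategy as the paper's: the equivalence of (a), (b), (c) via Propositions~\ref{red-sq1} and~\ref{red-sq2}, and a Dobbertin-style direct proof of (a). The equivalence part checks out: $k-1=(Q+1)(Q^2-1)$ gives $N=Q+1$, and substituting $\gamma=-1$ into $H$ yields exactly the function in (c); the computation for (b) also works (one lands on the displayed function only after composing with the $Q$-th power map and negation, since $\Tr_{Q^4/Q^2}((\omega-x)^k)$ naturally produces $\nu^Q$ and $x^{2Q}$ rather than $\nu$ and $x^2$, but that is harmless and of the same flavor as Theorem~\ref{theorem1}). For part (a) the paper says only that the argument is ``nearly identical'' to Theorem~\ref{case5}, so the real comparison is with that proof. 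Your setup of the four conjugate equations is correct, and your observation that the first and third (resp.\ second and fourth) share their nonlinear parts is a genuine improvement: the relation $x-z=d-f$, which in the proof of Theorem~\ref{case5} emerges only as output of a Gr\"obner-basis computation, here falls out by subtraction, and your explanation of it via Proposition~\ref{prop-surj} (the lines $\alpha+\gamma\,\F_{Q^2}$ are cosets of $\F_{Q^2}$ because $\gamma=-1\in\F_{Q^2}$) is exactly right. Your reduction to a single equation $s^{Q+2}+c_1s^{Q+1}+c_2s^Q+c_3s^2+c_4s+c_5=0$ with $c_i\in\F_{Q^2}$ is correct (the $c_i$ are symmetric under the $Q^2$-Frobenius swap $d\leftrightarrow f$, $e\leftrightarrow g$, hence lie in $\F_{Q^2}$), and your identification of the analogue of Lemma~\ref{zero} with the nonvanishing of the denominator in (c) on $\mu_{Q^2+1}$ is also correct and provable by the conjugation trick you describe.

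The substantive gap is that the decisive step is not carried out: you must still show that, for each $d\in\F_{Q^4}^*$, your degree-$(Q+2)$ equation has at most one root $s\in\F_{Q^2}$, and you only promise to ``adapt the Gr\"obner-basis and resultant machinery.'' That adaptation is where essentially all of the content of the proof of Theorem~\ref{case5} lives (the factorization into quadratics $D$, $E$, the resultant computations, and the elimination of degenerate parameter values), and it is not automatic that it transfers to your one-variable reformulation, whose shape ($s$ and $s^Q$ mixed multiplicatively) differs from the quintic $C(X)$ obtained in the paper's route. So as a standalone argument the proof of (a) is unfinished. In fairness, the paper itself declines to write out these details for this theorem, asserting only that they parallel Theorem~\ref{case5}; your proposal is no less complete than the paper's text, and its opening simplification would likely shorten the omitted computation, but the injectivity endgame still has to be executed before either version constitutes a proof.
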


We note that assertions (b) and (c) of this result are extremely similar to the corresponding assertions
in Theorem~\ref{case5}.  However, we have not been able to find a direct proof that these similar
assertions are logically equivalent to one another.  If we could find such a proof, then Theorem~\ref{case6}
would follow from Theorem~\ref{case5}.  At present, the best we can do is to prove Theorem~\ref{case6} via
a similar argument to our proof of Theorem~\ref{case5}.

\begin{proof}  
The equivalence of (a), (b) and (c) follows from Propositions~\ref{red-sq1} and \ref{red-sq2} in the same manner as in our previous results.
The proof of assertion (a) is nearly identical to the proof of assertion (a) in Theorem~\ref{case5}, so the details
can safely be left to the reader.
\end{proof}


\section{The case that $n=3$ and $k=(q^2+1)/2$}\label{kyureg-zieve:case7}

We now prove case (g) of Theorem~\ref{main}.

\begin{theorem}\label{case7}
If $q$ is odd, then
$$
F_7(X):= X + \Tr_{q^3/q}\big(X^{\frac{q^2+1}{2}}\big)
$$
permutes\/ $\F_{q^3}$.
\end{theorem}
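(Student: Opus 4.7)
The plan is to apply Proposition~\ref{zlem} to convert the problem into a question about a rational function on $\mu_{q^2+q+1}$, then use the odd order of that group to drastically simplify the rational function, and finally prove injectivity via a parity obstruction coming from the fact that $-1 \notin \mu_{q^2+q+1}$. Since $q$ is odd, each of the exponents $1,\ (q^2+1)/2,\ q(q^2+1)/2,\ q^2(q^2+1)/2$ of $F_7$ is congruent to $1 \pmod{q-1}$, and a direct computation writes $F_7(X) = X\,h(X^{q-1})$ where $h \in \F_p[Y]$ has exponents $0,\ (q+1)/2,\ (q^2+q+2)/2,\ (q^3+q^2+2q+2)/2$. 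By Proposition~\ref{zlem} it suffices to show that $G(X) := X\,h(X)^{q-1}$ permutes $\mu_{q^2+q+1}$.

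Writing $\alpha = (q+1)/2$ and $\beta = (q^2+1)/2$, the identity
\[
q\beta - \alpha \;=\; \tfrac{q^3-1}{2} \;=\; \tfrac{q-1}{2}(q^2+q+1)
\]
shows $q\beta \equiv \alpha \pmod{q^2+q+1}$, which reduces the four exponents of $h$ on $\mu_{q^2+q+1}$ to $\{0,\alpha,\alpha+\beta,2\alpha+\beta\}$ and yields the product factorization $h(Y) = (1+Y^\alpha)(1+Y^{\alpha+\beta})$ on $\mu_{q^2+q+1}$.  Since $\mu_{q^2+q+1}$ has odd order, squaring is bijective on it; setting $Z = Y^{1/2}$ and using $h(Y)^q = h(Y^q)$ together with $Z^{q^2+q+1} = 1$, one computes $G(Z^2) = Z(1+Z^q)/(1+Z^{q+1})$. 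The further substitution $W = Z^{q+1}$, bijective on $\mu_{q^2+q+1}$ because $\gcd(q+1,q^2+q+1) = 1$, reduces the problem to showing that the rational function $R(W) := W(1+W^{q^2})/(1+W)$ permutes $\mu_{q^2+q+1}$.

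To prove $R$ is injective, suppose $R(W_1) = R(W_2)$ with $W_1, W_2 \in \mu_{q^2+q+1}$. Cross-multiplying and invoking the characteristic-$p$ identity $a^{q^2}-b^{q^2} = (a-b)^{q^2}$, the resulting equation factors as
\[
(W_1-W_2)(1+W_2^{q^2}) + (W_1-W_2)^{q^2}\,W_1(1+W_2) = 0.
\]
If $W_1 \neq W_2$, dividing by $W_1-W_2$ yields
\[
(W_1-W_2)^{q^2-1} \;=\; -\frac{1+W_2^{q^2}}{W_1(1+W_2)}.
\]
The left side lies in $\mu_{q^2+q+1}$, because the $(q^2-1)$-th power map on $\F_{q^3}^*$ has image precisely the subgroup of order $(q^3-1)/(q-1) = q^2+q+1$. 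On the right, $(1+W_2^{q^2})/(1+W_2)$ has norm $1$ to $\F_q$ (its three Frobenius conjugates telescope, using $W_2^{q^3}=W_2$), hence it also lies in $\mu_{q^2+q+1}$; combined with $W_1\in\mu_{q^2+q+1}$, this forces the whole right side to be a $(-1)$-multiple of an element of $\mu_{q^2+q+1}$. Since $q^2+q+1$ is odd we have $-1\notin\mu_{q^2+q+1}$, so the two sides cannot agree, a contradiction.  Hence $W_1=W_2$ and $R$ permutes $\mu_{q^2+q+1}$.

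The hardest step I anticipate is recognizing the congruence $q\beta \equiv \alpha \pmod{q^2+q+1}$ and exploiting it to collapse $h$ into a product of two binomials on $\mu_{q^2+q+1}$; without this simplification the four-term polynomial $h$ is rather opaque. After that collapse the proof hinges on $|\mu_{q^2+q+1}| = q^2+q+1$ being odd, which makes available both the square-root substitution $Z = Y^{1/2}$ and the concluding $-1 \notin \mu_{q^2+q+1}$ parity obstruction; so essentially the same feature of the group drives both key simplifications.
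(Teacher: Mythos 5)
Your proof is correct. The first half runs parallel to the paper's: both apply Proposition~\ref{zlem} once and then compose with bijective power maps of $\mu_{q^2+q+1}$ to collapse the four-term $h$ into a product of binomials and ultimately into a function of the shape $W(1+W)^e$ (your $R(W)=W(1+W^{q^2})/(1+W)=W(1+W)^{q^2-1}$ is exactly the paper's $X(X+1)^{q-1}$ up to pre- and post-composition with power bijections of $\mu_{q^2+q+1}$; the paper reaches its form by evaluating $g(x^{2q})^{-q}$ rather than by your explicit binomial factorization, but these are the same simplification). Where you genuinely diverge is the endgame: the paper applies Proposition~\ref{zlem} a \emph{second} time, in reverse, to convert $X(X+1)^{q-1}$ on $\mu_{q^2+q+1}$ into the additive polynomial $X^q+X$ on $\F_{q^3}$, whose bijectivity is immediate from the triviality of its kernel. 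You instead stay inside $\mu_{q^2+q+1}$ and prove injectivity of $R$ directly, by factoring $R(W_1)-R(W_2)$ using Frobenius, observing that both $(W_1-W_2)^{q^2-1}$ and the norm-one quantity $(1+W_2^{q^2})/(1+W_2)$ lie in $\mu_{q^2+q+1}$, and deriving a contradiction from $-1\notin\mu_{q^2+q+1}$. The paper's finish is shorter and reuses existing machinery; yours is self-contained on the unit circle and makes visible the role of the odd order of $\mu_{q^2+q+1}$ (note that your parity obstruction and the paper's "$X^q+X$ has no nonzero roots" are two faces of the same fact, since $x^{q-1}=-1$ is insoluble in $\F_{q^3}$ precisely because $-1$ is not a $(q^2+q+1)$-th root of unity). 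All the computational claims I checked — the exponent reduction $q\beta\equiv\alpha\pmod{q^2+q+1}$, the factorization of $h$, the formula $G(Z^2)=Z(1+Z^q)/(1+Z^{q+1})$, the identity $S(Z)=R(Z^{q+1})$, and the factored difference equation — are accurate.
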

\begin{proof}
By Proposition~\ref{zlem}, it suffices to show that
$$
g(X) := X  (1 + X^{(q+1)/2} + X^{(q^2+q+2)/2} + X^{(q^2+2)(q+1)/2})^{q-1}
$$
permutes the set $\mu_{q^2+q+1}$ of $(q^2+q+1)$-th roots of unity in $\F_{q^3}^*$.
For $x \in \mu_{q^2+q+1}$ we compute
\begin{eqnarray*}
g(x^{2q}) & = & x^{2q}\,(1+x^{-1}+x^{q}+x^{q-1})^{q-1} \\ & = & x^{2q} \cdot (1+x^{-1})^{q-1} \cdot (1+x^{q})^{q-1} \\
 & = & x^{q^2+q} \cdot(1+x^{-1})^{q^2-1} = x^{-1}  (1+x^{-1})^{q^2-1},
\end{eqnarray*}
so that (since $1+x^{-1}\ne 0$)
\[
g(x^{2q})^{-q} = x^q(1+x^{-1})^{-1+q}=x(x+1)^{q-1}.
\]
Again by Proposition~\ref{zlem}, $X(X+1)^{q-1}$ permutes $\mu_{q^2+q+1}$ if and only if $X^q+X$
permutes $\F_{q^3}$, which is indeed the case since $X^q+X$ is an additive polynomial having no nonzero
roots in $\F_{q^3}$.  Therefore $g(X^{2q})^{-q}$ permutes $\mu_{q^2+q+1}$, so that also $g(X)$ permutes $\mu_{q^2+q+1}$,
whence $F_7$ permutes $\F_{q^3}$.
\end{proof}


\section{The case that $n=3$ and $k=q^2-q+1$}\label{kyureg-zieve:case8}

In this section we prove case (h) of Theorem~\ref{main}.

\begin{theorem}\label{case8}
If $q$ is odd then
\[
F_8(X):= X - \frac{1}{2} \Tr_{q^3/q}\big(X^{q^2-q+1}\big)
\]
permutes\/ $\F_{q^3}$.
\end{theorem}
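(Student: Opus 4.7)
The overall plan is to apply Proposition~\ref{prop-surj} with $\gamma=-1/2\in\F_q^*$ and $f(X)=\Tr_{q^3/q}(X^k)$, which reduces the claim to showing that the map $\phi_\alpha\colon x\mapsto x+\Tr((\alpha-x/2)^k)$ is a bijection of $\F_q$ for every $\alpha\in\F_{q^3}$. Since $\F_q$ is finite, bijectivity is equivalent to injectivity. Comparing $\phi_\alpha(x_1)=\phi_\alpha(x_2)$, setting $c=(x_1-x_2)/2$ and $\beta=\alpha-x_2/2$, and then rescaling via $\beta=cz$ (valid since $c^k=c$ for $c\in\F_q^*$ as $k\equiv 1\pmod{q-1}$), the injectivity of every $\phi_\alpha$ becomes the single condition
\[
\Tr_{q^3/q}(z^k+(1-z)^k)\neq 2\qquad\text{for every }z\in\F_{q^3},
\]
where we used that $k$ is odd to rewrite $-(z-1)^k$ as $(1-z)^k$.

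The case $z\in\F_q$ is immediate: since $z^k=z$ when $z\in\F_q$, the trace equals $\Tr(1)=3$. For the remaining range $z\in\F_{q^3}\setminus\F_q$, we exploit the identity $(z-c)^k=(z-c)(z^{q^2}-c)/(z^q-c)$ for $c\in\{0,1\}$, which applies because $z\notin\F_q$ forces $z^q\neq c$. This writes $z^k+(1-z)^k$ as an explicit rational function of the Galois conjugates $z,z^q,z^{q^2}$. Taking the trace as a cyclic sum and simplifying by partial fractions in terms of the minimal polynomial $P(T)\in\F_q[T]$ of $z$ shows after routine computation that $\Tr(z^k+(1-z)^k)=2$ is equivalent to
\[
P(0)\,P'(1)^2-P(1)\,P'(0)^2=8\,P(0)\,P(1).
\]

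What remains is to prove that no monic irreducible cubic $P(T)\in\F_q[T]$ satisfies the displayed identity. Writing $P(T)=T^3+aT^2+bT+c$ turns it into a polynomial equation in $(a,b,c)\in\F_q^3$, and one shows by direct analysis that every solution forces a factorisation of $P$ over $\F_q$, contradicting $z\in\F_{q^3}\setminus\F_q$. Following the introduction, a clean conceptual way to organise this final step is to further compose $F_8$ with the bijective power map $X\mapsto X^{k^{-1}}$ (which exists since $\gcd(k,q^3-1)=1$, as $k$ is odd and $k(q+1)\equiv 2\pmod{q^3-1}$) and with a suitable bijective additive polynomial, obtaining a function whose action splits cleanly according to whether its input is a square or a nonsquare in $\F_{q^3}$; establishing bijectivity separately on each of these two classes, and checking that their images are disjoint, is the main technical obstacle.
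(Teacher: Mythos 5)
Your reduction is correct as far as it goes: applying Proposition~\ref{prop-surj} with $\gamma=-1/2$, rescaling by $c=(x_1-x_2)/2$ using $k\equiv 1\pmod{q-1}$, and converting the trace condition into a statement about the minimal polynomial of $z$ via $(z-c)^k=(z-c)(z^{q^2}-c)/(z^q-c)$ are all legitimate steps, and the identity $P(0)P'(1)^2-P(1)P'(0)^2=8P(0)P(1)$ checks out. But the proof stops exactly where the difficulty begins. The assertion that ``one shows by direct analysis that every solution forces a factorisation of $P$ over $\F_q$'' is the entire content of the theorem at this stage and nothing is offered in its support: writing $P=T^3+aT^2+bT+c$, the identity defines a surface in $(a,b,c)$-space with roughly $q^2$ points over $\F_q$, and proving that every one of them corresponds to a \emph{reducible} cubic is a genuinely nontrivial statement, not a routine verification. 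Your closing paragraph then abandons this route and gestures at composing with a power map and ``a suitable bijective additive polynomial'' so as to treat squares and nonsquares separately, while conceding that carrying this out is ``the main technical obstacle''---so by your own account the argument is not complete.

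For comparison, the paper proceeds entirely differently and supplies exactly the ingredients your sketch lacks. It sets $H(X):=2X^{(q^4+q^2)/2}-\Tr_{q^3/q}(X)$ and $L(X):=2X^q-\Tr_{q^3/q}(X)$, observes that $L$ is an additive bijection of $\F_{q^3}$, and proves the key identity $L(H(x^2))=L(x)^2$. This shows $G:=L\circ H$ maps the set $S$ of squares onto $S$; since every term of $G$ has degree $\equiv 1\pmod{q-1}$, one also gets $G(wS)=wG(S)=wS$ for a nonsquare $w\in\F_q^*$, so $G$, hence $H$, permutes $\F_{q^3}$. Finally $2F_8(x^q)=H(x^{q^2-q+1})$ together with $\gcd(q^2-q+1,q^3-1)=1$ finishes the proof. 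Without an explicit choice of $L$ and the identity $L(H(x^2))=L(x)^2$, the squares/nonsquares strategy you allude to cannot be executed; and without a proof that the cubic surface above avoids the irreducible locus, your main line of argument remains open. Either route would need to be completed before this counts as a proof.
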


\begin{proof}
Define
\begin{align*}
H(X)&:=2X^{(q^4+q^2)/2} - \Tr_{q^3/q}(X) \\
L(X)&:=2X^q-\Tr_{q^3/q}(X) \\
G(X)&:=L(H(X)).
\end{align*}
First note that $L(X)$ has no nonzero roots in $\F_{q^3}$, since any such root $x$
would satisfy $2x^q=\Tr_{q^3/q}(x)\in\F_q$ so that $x\in\F_q$, whence
$L(x)=-x$.  Since $L(X)$ is an additive polynomial in $\F_{q^3}[X]$, it follows
that $L(X)$ permutes $\F_{q^3}$.  Next, for every $x\in\F_{q^3}$ we have
\[
G(x^2) = L(x)^2.
\]
Writing $S$ for the set of squares in $\F_{q^3}$, it follows that
\[
G(S)=\bigl(L(\F_{q^3})\bigr)^2=S,
\]
since $L$ permutes $\F_{q^3}$.  Let $w$ be any nonsquare in $\F_q^*$, so that
also $w$ is a nonsquare in $\F_{q^3}^*$ and thus $\F_{q^3}=S\cup wS$.
Since all terms of both $L$ and $H$ have degree congruent to $1$~(mod~$q-1$),
the same is true of $G$, so the hypothesis $w^{q-1}=1$ implies that
\[
G(wS)=wG(S)=wS.
\]
Therefore $G$ permutes both $S$ and $wS$, so $G$ permutes $S\cup wS=\F_{q^3}$.
Hence also $H$ permutes $\F_{q^3}$, and since $\gcd(q^2-q+1,q^3-1)=1$ it follows
that $H(X^{q^2-q+1})$ permutes $\F_{q^3}$.  It follows that $F_8(X^q)$ (and hence $F_8(X)$)
permutes $\F_{q^3}$ since $H(0)=0=F_8(0)$ and for $x\in\F_{q^3}^*$ we have
\[
H(x^{q^2-q+1}) = 2x^q - \Tr_{q^3/q}(x^{q^2-q+1}) = 2F_8(x^q). \qedhere
\]
\end{proof}

Our proof
of bijectivity of $H$ is inspired by the proof of \cite[Thm.~3.7]{weng-zeng},  which also distinguished the behavior
of a function on squares and nonsquares.
The key identity  
$L(H(x^2))=L(x)^2 $
in our proof says that  $x\mapsto H(x^2)$  is a planar function which is affine equivalent to
$x\mapsto x^2$. Since the nucleus of any semifield induced by $x\mapsto H(x^2)$ is $\F_{q^3}$, the hypotheses of 
\cite[Thm.~3.7]{weng-zeng}  do not apply in our situation; however, it appears that those hypotheses can be
relaxed to cover both our situation and several others.  Our proof of Theorem~\ref{case8}
uses our key identity to avoid the bulk of the work involved in the proof of \cite[Thm.~3.7]{weng-zeng}.


\section{The case that $k=q^{\ell}+1$ with $\ell\mid n$} \label{kyureg-zieve:case9}

We now prove case (i) of Theorem~\ref{main}.

\begin{theorem}\label{case9}
For any prime power $q$ and any positive integers $\ell,n$ with $2\ell\mid n$, if $\gamma\in\F_{q^n}$ satisfies $\gamma^{q^{2\ell}-1}=-1$
then the polynomial $\displaystyle{F_9(x) := X + \gamma \Tr_{q^n/q}(X^{q^{\ell}+1})}$ permutes\/ $\F_{q^n}$.
\end{theorem}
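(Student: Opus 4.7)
The plan is to apply Proposition~\ref{prop-surj} with $f(y):=\Tr_{q^n/q}(y^{q^\ell+1})$, reducing the claim to showing that for every $\alpha\in\F_{q^n}$ the induced map $x\mapsto x+f(\alpha+\gamma x)$ is bijective on $\F_q$. My goal will be to prove the much stronger statement that this map is the \emph{translation} $x\mapsto x+\Tr_{q^n/q}(\alpha^{q^\ell+1})$, and hence automatically bijective. Since $\F_q\subseteq\F_{q^\ell}$, for $x\in\F_q$ one has $x^{q^\ell}=x$, so
\[
(\alpha+\gamma x)^{q^\ell+1}=\alpha^{q^\ell+1}+x\bigl(\alpha^{q^\ell}\gamma+\alpha\gamma^{q^\ell}\bigr)+x^2\gamma^{q^\ell+1},
\]
and applying the $\F_q$-linear map $\Tr_{q^n/q}$ yields
\[
f(\alpha+\gamma x)=\Tr_{q^n/q}(\alpha^{q^\ell+1})+x\cdot T(\alpha)+x^2\cdot \Tr_{q^n/q}(\gamma^{q^\ell+1}),
\]
where $T(\alpha):=\Tr_{q^n/q}(\alpha^{q^\ell}\gamma+\alpha\gamma^{q^\ell})$. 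Hence the remaining task is to prove $\Tr_{q^n/q}(\gamma^{q^\ell+1})=0$ and $T(\alpha)=0$ for every $\alpha\in\F_{q^n}$.

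Both vanishings will be extracted from the relation $\gamma^{q^{2\ell}}=-\gamma$, iterated to $\gamma^{q^{2\ell k}}=(-1)^k\gamma$. Combined with $\gamma^{q^n}=\gamma$ this forces $(-1)^r=1$, so in odd characteristic $r$ must be even; in characteristic $2$ the sign is trivially $+1$. For the linear coefficient, Frobenius invariance of the trace gives $\Tr_{q^n/q}(\alpha^{q^\ell}\gamma)=\Tr_{q^n/q}(\alpha\gamma^{q^{n-\ell}})$, so that $T(\alpha)=\Tr_{q^n/q}\bigl(\alpha(\gamma^{q^{n-\ell}}+\gamma^{q^\ell})\bigr)$. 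Writing $n-\ell=2\ell(r-1)+\ell$ and applying the iterated relation produces $\gamma^{q^{n-\ell}}=(-1)^{r-1}\gamma^{q^\ell}$, so that $\gamma^{q^{n-\ell}}+\gamma^{q^\ell}=((-1)^{r-1}+1)\gamma^{q^\ell}=0$ by the parity constraint on $r$ (or trivially in characteristic $2$); hence $T\equiv 0$.

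For the quadratic coefficient I would set $\beta:=\gamma^{q^\ell+1}$ and compute $\beta^{q^\ell}=\gamma^{q^{2\ell}}\cdot\gamma^{q^\ell}=-\beta$, so that $\beta^{q^{k\ell}}=(-1)^k\beta$. Grouping the terms of $\Tr_{q^n/q}(\beta)=\sum_{i=0}^{n-1}\beta^{q^i}$ by residue modulo $\ell$ rewrites each of the $\ell$ inner sums as $\beta^{q^j}\sum_{k=0}^{2r-1}(-1)^k$, which vanishes because in odd characteristic $r$ is even, while in characteristic $2$ the sum is $2r=0$. This delivers $\Tr_{q^n/q}(\gamma^{q^\ell+1})=0$, completing the reduction via Proposition~\ref{prop-surj}. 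The main obstacle I anticipate is merely keeping the sign bookkeeping uniform across the two characteristics; the algebraic core is the single observation that $\gamma^{q^{n-\ell}}+\gamma^{q^\ell}=0$, which simultaneously annihilates both the linear and quadratic coefficients and collapses the map induced on each line $\alpha+\gamma\F_q$ to a translation.
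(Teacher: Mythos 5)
Your proposal is correct and follows essentially the same route as the paper: reduce via Proposition~\ref{prop-surj} to the maps on the lines $\alpha+\gamma\F_q$, expand $(\alpha+\gamma x)^{q^\ell+1}$ using $x^{q^\ell}=x$, and show that the quadratic and linear trace coefficients vanish so that each line map is a translation by $\Tr_{q^n/q}(\alpha^{q^\ell+1})$. The only difference is cosmetic: the paper kills both coefficients with a single Frobenius shift by $q^\ell$ (e.g.\ $\Tr_{q^n/q}(\alpha\gamma^{q^\ell})=\Tr_{q^n/q}(\alpha^{q^\ell}\gamma^{q^{2\ell}})=-\Tr_{q^n/q}(\alpha^{q^\ell}\gamma)$, and similarly for $\gamma^{q^\ell+1}$ via transitivity of the trace through $\F_{q^{2\ell}}$), rather than your mod-$\ell$ grouping of the trace sum and the parity-of-$r$ bookkeeping, which is valid but not needed.
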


Note that if $q$ is odd then $\F_{q^n}$ contains elements $\gamma$ as in Theorem~\ref{case9} if and only if $n$ is divisible by $4\ell$.

\begin{proof}
By Proposition~\ref{prop-surj}, $F_9$ permutes $\F_{q^n}$ if and only if for each $\alpha\in\F_{q^n}$
the polynomial $h_{\alpha}(X):=X+\Tr_{q^n/q}\Bigl((\alpha+\gamma X)^{q^\ell+1}\Bigr)$ permutes $\F_q$.
For $x\in\F_q$ and $Q:=q^{\ell}$ we have
\begin{align*}
(\alpha+\gamma x)^{Q+1} &= \gamma^{Q+1} x^{Q+1} + \alpha \gamma^Q x^Q + \alpha^Q \gamma x + \alpha^{Q+1} \\
 &= \gamma^{Q+1} x^2 + \alpha \gamma^Q x + \alpha^Q \gamma x + \alpha^{Q+1},
 \end{align*}
so that
\[
\Tr_{q^n/q}((\alpha+\gamma x)^{Q+1}) =
\Tr_{q^n/q}(\gamma^{Q+1}) x^2 + \Tr_{q^n/q}(\alpha\gamma^Q+\alpha^Q\gamma)x + \Tr_{q^n/q}(\alpha^{Q+1}).
\]
Since $\gamma^{Q^2-1}=-1$, we have $\gamma^{Q+Q^2}=-\gamma^{Q+1}$ and thus
\[
\Tr_{q^n/q}(\gamma^{Q+1}) = \Tr_{q^n/Q^2}(\Tr_{Q/q}(\Tr_{Q^2/Q}(\gamma^{Q+1}))) = 0.
\]
Likewise,
\[
\Tr_{q^n/q}(\alpha\gamma^Q + \alpha^Q\gamma) =
\Tr_{q^n/q}(\alpha^Q\gamma^{Q^2} + \alpha^Q\gamma) = \Tr_{q^n/q}(0) = 0.
\]
Thus $h_{\alpha}$ induces the same function on $\F_q$ as does the polynomial
$X+\Tr_{q^n/q}(\alpha^{Q+1})$, so that indeed $h_{\alpha}$ permutes $\F_q$ as required.
\end{proof}

\begin{remark} \label{finalrem}
The proof of Theorem~\ref{case9}  shows that every $\alpha \in \F_{q^n}$ satisfies
\[
\Tr_{q^n/q}((\alpha+\gamma x)^{Q+1}) - \Tr_{q^n/q}(\alpha^{Q+1}) =0,
\]
 so that $\gamma$ is a linear translator of $\Tr_{q^n/q}(x^{Q+1})$ (as was claimed on page 3). Theorem~\ref{case9} is a generalization of \cite[Thm.~6]{charpin-kyureg-fq}, which addressed the case of prime $q$.
 \end{remark}



\end{document}